\newtheorem{Th}{Theorem}[section]              
\newtheorem{Cor}{Corollary}[section]
\newtheorem{Prop}[Th]{Proposition}
\newtheorem{Lem}{Lemma}[section]
\theoremstyle{definition}
\newtheorem{Rm}{Remark}[section]
\newcommand{\C}{\mathbb{C}}
\newcommand{\N}{\mathbb{N}}
\newcommand{\Z}{\mathbb{Z}}
\newcommand{\R}{{\mathbb{R}}}
\newcommand{\T}{{\mathbb{T}}}
\newcommand{\eps}{{\varepsilon}}
\DeclareMathAlphabet{\mathpzc}{OT1}{pzc}{m}{it} 
\DeclareMathOperator{\sgn}{sgn}
\DeclareMathOperator{\supp}{supp}
\DeclareFontFamily{U}{mathx}{\hyphenchar\font45}
\DeclareFontShape{U}{mathx}{m}{n}{
      <5> <6> <7> <8> <9> <10>
      <10.95> <12> <14.4> <17.28> <20.74> <24.88>
      mathx10
      }{}
\DeclareSymbolFont{mathx}{U}{mathx}{m}{n}
\DeclareMathAccent{\widecheck}{0}{mathx}{"71}
\DeclareMathAccent{\wideparen}{0}{mathx}{"75}
\title[Bounds for partial derivatives]
      {Bounds for partial derivatives:\\ necessity of UMD and sharp constants}
\author[A.J. Castro]{Alejandro J. Castro}
\author[T. P. Hyt\"onen]{Tuomas P. Hyt\"onen}
\address{
        A.J. Castro,
        Departamento de Análisis Matemático,
        Universidad de la Laguna,
        Campus de Anchieta, Avda. Astrofísico Francisco Sánchez, s/n,
        38271, La Laguna (Sta. Cruz de Tenerife), Spain}
\email{ajcastro@ull.es}
\address{
        T.P. Hyt\"onen,
        Department of Mathematics and Statistics, University of Helsinki,
        Gustaf H\"allstr\"omin katu 2b, FI-00014 Helsinki, Finland}
\email{tuomas.hytonen@helsinki.fi}
\keywords{Partial derivative, Fourier multiplier, sharp constant, UMD property, martingale inequality, transference}
\subjclass[2010]{35A23, 42B15, 60G46}
\thanks{T.P.H. is supported by the European Union via the ERC Starting Grant
``Analytic--probabilistic methods for borderline singular
integrals''.
A.J.C is supported by a FPU grant from Spanish Government.
This research was mostly carried out during A.J.C.'s
visit to the University of Helsinki in Autumn 2013.}
\begin{document}


  \maketitle                

\begin{abstract}
We prove the necessity of the UMD condition, with a quantitative estimate of the UMD constant, for any inequality in a family of $L^p$ bounds between different partial derivatives $\partial^\beta u$ of $u\in C^\infty_c(\R^n,X)$. In particular, we show that the estimate $\|u_{xy}\|_p\leq K(\|u_{xx}\|_p+\|u_{yy}\|_p)$ characterizes the UMD property, and the best constant $K$ is equal to one half of the UMD constant. This precise value of $K$ seems to be new even for scalar-valued functions.
\end{abstract}

\section{Introduction} \label{sec:intro}

A priori estimates between different partial derivatives such as
\begin{equation}\label{eq:estim}
    \|\partial^\beta u\|_{L^{q}(\R^n,X)}
        \leq K \sum_{j=1}^N \|\partial^{\alpha^j} u\|_{L^{p_j}(\R^n,X)}, \quad u \in C_c^\infty(\R^n,X),
\end{equation}
play an important role in Analysis. Here $\partial^\beta=\partial_1^{\beta_1} \cdots \partial_n^{\beta_n}$ is the usual derivative associated to the multi-index
$\beta=(\beta_1, \dots, \beta_n)$. In the case of scalar-valued functions, $X=\C$ or $X=\R$, and exponents in the range $q,p_j\in(1,\infty)$, a complete characterization of admissible sets of $q,p_j$ and $\beta,\alpha^j$ is a classical result of O. Besov, V. Il$'$in and S. Nikol$'$ski{\u\i} \cite{BNS}. The second author showed in \cite{Hy4} that the same characterization remains valid for vector-valued functions, provided that the target space $X$ has the UMD property.

A Banach space $X$ has this property
if, for some (equivalently, for all) $1<p<\infty$, there exists a constant $C_p<\infty$, such that
\begin{equation}\label{eq:defUMD}
    \Big\| \sum_{\ell=1}^r \sigma_\ell d_\ell \Big\|_{L^p(\Omega,X)}
        \leq C_p \Big\| \sum_{\ell=1}^r d_\ell \Big\|_{L^p(\Omega,X)},
\end{equation}
for each $r \in \N$, every sequence of signs $(\sigma_\ell)_{\ell \in \N}$ in $\{-1,1\}$
and any martingale difference sequence
$(d_\ell)_{\ell \in \N}$ in $L^p(\Omega,X)$. Here
$L^p(\Omega,X)$ represents the space of all functions
that take values in the Banach space $X$ and their $X$ norm is
$p$-integrable on $\Omega$. We denote by $\beta_p(X)$ the smallest admissible constant $C_p$.
Main properties of UMD spaces can be found in the survey of J. L. Rubio de Francia \cite{Rub}.

Our present goals are two-fold. On the one hand, we show that the result of \cite{Hy4} is optimal in the sense that it establishes the most general class of Banach spaces  where the classical Besov--Il'in--Nikol'ski{\u\i} result \cite{BNS} can be generalized; namely, we exhibit instances of \eqref{eq:estim}, for which UMD is not only sufficient (as shown in \cite{Hy4}) but also necessary. This continues a tradition of diverse characterizations of the UMD property (see for instance \cite{Bou3,Bu2,CL,GMS,Gue,HTV,Hy3,Hy,KaWe,KuWe,Xu}, amongst others). On the other hand, as a byproduct, we obtain information on the size of the constant $K$ in \eqref{eq:estim}, which seems to be new already in the scalar case. Recall that the list of analytic inequalities, where the sharp constant can be determined, is somewhat restricted, whereas the UMD inequality \eqref{eq:defUMD} for $X=\C$ or $X=\R$ is one of the prominent positive examples with $\beta_p(\C)=\beta_p(\R)=(p^*-1)$; here $p^*=\!
 max\{p,p'\}$. This is a celebrated result of Burkholder; see \cite{Bu5}, \cite[p. 12]{Bu4} and \cite[Theorem 14]{Bu6}. Thus it is useful to relate other estimates to \eqref{eq:defUMD}.

A particular case of \eqref{eq:estim} that we have in mind is the basic bound
\begin{equation}\label{eq:mainExample}
  \|\partial_1\partial_2 u\|_{L^p(\R^2,X)}
  \leq K(\|\partial_1^2 u\|_{L^p(\R^2,X)}+\|\partial_2^2 u\|_{L^p(\R^2,X)}).
\end{equation}
Since $\partial_1\partial_2 u=R_1R_2\triangle u$, where $R_i$ is the Riesz transform in the $i$th direction, it follows at once that
\begin{equation}\label{eq:R12comput}
\begin{split}
  \|\partial_1\partial_2 u\|_{L^p(\R^2,X)}
  &\leq\|R_1R_2\|_{\mathcal{L}(L^p(\R^2,X))}\|\partial_1^2 u+\partial_2^2 u\|_{L^p(\R^2,X)} \\
   &\leq\|R_1R_2\|_{\mathcal{L}(L^p(\R^2,X))}(\|\partial_1^2 u\|_{L^p(\R^2,X)}+\|\partial_2^2 u\|_{L^p(\R^2,X)}),
\end{split}
\end{equation}
and $\|R_1R_2\|_{\mathcal{L}(L^p(\R^2,X))}=\beta_p(X)/2$ by \cite{GMS}, so that $K\leq \beta_p(X)/2$ in \eqref{eq:mainExample}. We prove that this constant, optimal for the first line in \eqref{eq:R12comput}, is also sharp for the seemingly weaker inequality \eqref{eq:mainExample}.

It is interesting that the best constant in such a simple-looking classical inequality appears to be previously unknown.
This constant is somewhat connected to the famous open problem of determining the $L^p$ norm of the Beurling transform, whose imaginary part is $2R_1R_2$; see \cite{GMS} for more details. It has been long conjectured \cite{Iwa} that the norm of the Beurling operator on $L^p(\R^2)$ is $\beta_p(\C)$, and a surprising contribution of \cite{GMS} was to show that this conjectured norm is already achieved by the imaginary part alone. Our result further amplifies this phenomenon.

We then turn to the precise formulation of our main results. We are only concerned with the case that $p_j\equiv q=:p$ in \eqref{eq:estim}, and all multi-indices $\beta,\alpha^j$ have equal length, which we denote by $|\beta|=\beta_1 + \dots +\beta_n$, when $\beta=(\beta_1, \dots, \beta_n) \in \N^n$. We also need to assume that the components of $\beta$ have sufficiently different parity from those of each $\alpha^j$.

\begin{Th}\label{Th:main}
Let $1<p<\infty$, $N \in \N$ and $\alpha^j$, $\beta \in \N^n$. Let $|\beta|=|\alpha^j|$ for all $j=1, \ldots, N$,
and suppose that there exists a set $F\subset\{1,\ldots,n\}$ such that $\sum_{\ell\in F}\alpha^j_\ell$ has the same parity (even or odd) for each $j=1,\ldots,N$, which is different from the parity of $\sum_{\ell\in F}\beta_{\ell}$. If the following estimate holds:
   \begin{equation}\label{eq:estim2}
    \|\partial^\beta u\|_{L^p(\R^n,X)}
        \leq K \sum_{j=1}^N \|\partial^{\alpha^j} u\|_{L^p(\R^n,X)}, \quad u \in C_c^\infty(\R^n,X),
    \end{equation}
then $X$ is UMD and $\beta_p(X) \leq KN$.
\end{Th}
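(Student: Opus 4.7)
The overall strategy is to extract from \eqref{eq:estim2} a Fourier-multiplier bound on $L^p(\R^n,X)$ of norm at most $KN$, and to conclude via the quantitative link between such bounds and the UMD constant.

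The first step is to recast \eqref{eq:estim2} on the Fourier side. Since $|\beta|=|\alpha^j|$, the symbols $(i\xi)^\beta$ and $(i\xi)^{\alpha^j}$ are homogeneous of a common degree, and the ratios $m_j(\xi):=\xi^\beta/\xi^{\alpha^j}$ are $0$-homogeneous. For Schwartz $u$ with Fourier support avoiding the zero sets of the $\xi^{\alpha^j}$, one writes $\partial^\beta u = m_j(D)\,\partial^{\alpha^j}u$, so \eqref{eq:estim2} becomes a Fourier-multiplier inequality. The parity hypothesis now enters: the sign flip $\epsilon\in\{-1,+1\}^n$ which is $-1$ precisely on $F$ acts on the symbols so that every $(i\xi)^{\alpha^j}$ picks up a common sign, while $(i\xi)^\beta$ picks up the opposite sign; each quotient $m_j$ is therefore \emph{odd} under $\epsilon$, the characteristic feature of a Hilbert/Riesz-type multiplier. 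An anisotropic rescaling $u(x)\mapsto u(\lambda_1 x_1,\ldots,\lambda_n x_n)$ (which multiplies $\|\partial^\gamma u\|_p$ by $\prod_\ell \lambda_\ell^{\gamma_\ell-1/p}$) lets one equalize the $N$ terms on the right of \eqref{eq:estim2}, so that $\sum_j\|\partial^{\alpha^j}u\|_p$ collapses to $N$ copies of a single $L^p$-norm. This yields a single-operator bound of norm at most $KN$ on a canonical $0$-homogeneous, $\epsilon$-odd multiplier.

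The conclusion follows from the known identification of such multiplier norms with $\beta_p(X)$. For the prototype \eqref{eq:mainExample} this is the identity $\|R_1 R_2\|_{\mathcal{L}(L^p(\R^2,X))}=\beta_p(X)/2$ from \cite{GMS}, which in the reverse direction immediately gives $\beta_p(X)\le 2K = KN$ for $N=2$. In the general case one invokes the analogous quantitative Bourgain-type transference from $\epsilon$-odd Fourier multipliers on $L^p(\R^n,X)$ to martingale transforms, yielding $\beta_p(X)\le KN$ and in particular forcing $X$ to be UMD.

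The main technical obstacle is the anisotropic-rescaling step: one must arrange that the $N$ terms on the right of \eqref{eq:estim2} collapse exactly to $N$ copies of a single quantity (rather than merely being bounded above by it), so that the sharp constant $KN$ is produced. The uniform parity of the $\alpha^j$'s, opposite to that of $\beta$, is precisely what permits such a rescaling and guarantees that the extracted limiting multiplier has the $\epsilon$-odd Hilbert/Riesz structure required for the transference.
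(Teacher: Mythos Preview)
Your proposal contains genuine gaps at two critical points.

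First, the anisotropic rescaling step does not do what you claim. Applying \eqref{eq:estim2} to $u_\lambda(x)=u(\lambda_1 x_1,\ldots,\lambda_n x_n)$ and dividing out the Jacobian gives
\[
  \Big(\prod_\ell\lambda_\ell^{\beta_\ell}\Big)\|\partial^\beta u\|_p
  \le K\sum_{j=1}^N\Big(\prod_\ell\lambda_\ell^{\alpha^j_\ell}\Big)\|\partial^{\alpha^j}u\|_p.
\]
Choosing $\lambda$ can adjust the \emph{weights} on the right, but it cannot make the $N$ distinct norms $\|\partial^{\alpha^j}u\|_p$ equal to one another; they are different functionals of $u$. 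So the sum never ``collapses to $N$ copies of a single $L^p$-norm'', and no single-multiplier bound of norm $KN$ drops out. (Note also that your proposed symbols $\xi^\beta/\xi^{\alpha^j}$ are unbounded near the coordinate hyperplanes and are not $L^p$ multipliers at all; the paper works instead with the bounded homogeneous symbols $\xi^\beta/|\xi|^{|\beta|}$ and $\xi^{\alpha^j}/|\xi|^{|\alpha^j|}$.)

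Second, even granting a single $\epsilon$-odd homogeneous multiplier $T$ with $\|T\|\le KN$, the inequality $\beta_p(X)\le\|T\|$ is not a black-box fact one can simply ``invoke''; the GMS identity you cite is specific to $R_1R_2$, and for a general odd multiplier there is no such lower bound on its norm in terms of $\beta_p(X)$. One must actually carry out the transference.

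The paper's route is different in spirit. It keeps all $N$ terms throughout: the $\R^n$ inequality \eqref{eq:estim3} is transferred to $\T^n$ and then to $\T^{rn}$ (Propositions~\ref{Prop:RnTn} and \ref{Prop:TnTrn}). The collapse to $N$ identical terms happens only at the very end, not via rescaling but via a clever choice of \emph{test functions}: for $b\in\{-1,1\}^n$ one has $T_{\widetilde m_j}a_b=m_j(b)a_b$, and when $b$ is either $(1,\ldots,1)$ or the sign vector $b_F$ supported on $F$, the parity hypothesis forces $m_j(b)=n^{-|\beta|/2}$ for \emph{every} $j$, whereas $m(b)=\pm n^{-|\beta|/2}$. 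On these test functions the right-hand side of the transferred inequality genuinely becomes $N$ equal terms, and the $\pm$ on the left encodes the signs $\sigma_\ell$ of a Paley--Walsh martingale transform, giving $\beta_p(X)\le KN$ via Lemma~\ref{Lem:reduction}. Your intuition about the role of the $\epsilon$-flip is correct, but it enters through this evaluation of the symbols at lattice points $b\in\{-1,1\}^n$, not through rescaling on $\R^n$.
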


\begin{Rm}
It is known (see e.g. \cite{Hy4}) that a necessary condition for \eqref{eq:estim2} to hold in any Banach space is that $\beta$ is a convex combination of the multi-indices $\alpha^j$. Although we will not explicitly use this fact, it is worth observing that the assumption \eqref{eq:estim2} is only meaningful for such sets of multi-indices.
\end{Rm}

The following corollaries demonstrate the somewhat technical conditions on the multi-indices assumed in the theorem:

\begin{Cor}
Let $X$ be a Banach space and $1<p<\infty$. Let $\beta\in\N^n$ be a multi-index such that $|\beta|$ is even, but $\beta_s$ is odd for at least one index $s\in\{1,\ldots,n\}$. Then the estimate
\begin{equation*}
  \|\partial^\beta u\|_{L^p(\R^n,X)}\leq K\sum_{j=1}^n\|\partial_j^{|\beta|}u\|_{L^p(\R^n,X)},\qquad u\in C^\infty_c(\R^n,X),
\end{equation*}
holds if and only if $X$ is a UMD space, and in this case $\beta_p(X)\leq Kn$.
\end{Cor}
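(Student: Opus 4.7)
The plan is to deduce the corollary from Theorem \ref{Th:main} for the necessity (together with the quantitative bound), and to appeal to \cite{Hy4} for the sufficiency.

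For the ``if'' direction, assume $X$ is UMD. The estimate is then an instance of the vector-valued Besov--Il$'$in--Nikol$'$ski{\u\i} theorem proved in \cite{Hy4}, applied with the $N=n$ multi-indices $\alpha^j=|\beta|\,e_j$, where $e_j\in\N^n$ is the $j$-th standard unit vector. The admissibility hypothesis that $\beta$ lies in the convex hull of the $\alpha^j$ (mentioned in the Remark above) is automatic here, since $\beta=\sum_{j=1}^n (\beta_j/|\beta|)\,\alpha^j$, which is a valid convex combination because $|\beta|$ is even and hence nonzero.

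For the ``only if'' direction with the quantitative bound $\beta_p(X)\leq Kn$, I would apply Theorem \ref{Th:main} with the same choice $N=n$ and $\alpha^j=|\beta|\,e_j$, so that $\partial^{\alpha^j}=\partial_j^{|\beta|}$ and the hypothesized estimate matches \eqref{eq:estim2} verbatim. All multi-indices have equal length $|\beta|=|\alpha^j|$, so the only remaining point is to produce a set $F\subset\{1,\ldots,n\}$ meeting the parity condition. The natural choice is the singleton $F=\{s\}$, where $s$ is the index guaranteed by the hypothesis with $\beta_s$ odd. Then $\sum_{\ell\in F}\alpha^j_\ell=\alpha^j_s$, which equals $|\beta|$ if $j=s$ and $0$ otherwise; both are even, since $|\beta|$ is even. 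Meanwhile $\sum_{\ell\in F}\beta_\ell=\beta_s$ is odd. The parities differ, so Theorem \ref{Th:main} applies and yields $\beta_p(X)\leq KN=Kn$.

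There is no substantive obstacle here: the corollary is a direct specialization of Theorem \ref{Th:main}, and the role of the somewhat technical parity hypothesis in the theorem is exposed by inspecting the single coordinate $s$ that witnesses the odd component of $\beta$. The main work of the paper is in proving Theorem \ref{Th:main} itself; once that is in hand, the corollary is essentially a matter of unwinding definitions.
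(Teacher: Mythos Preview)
Your proposal is correct and follows essentially the same argument as the paper: for sufficiency you cite \cite{Hy4}, and for necessity you apply Theorem~\ref{Th:main} with $\alpha^j=|\beta|\,e_j$ and $F=\{s\}$, checking that $\sum_{\ell\in F}\alpha^j_\ell=|\beta|\delta_{js}$ is even while $\sum_{\ell\in F}\beta_\ell=\beta_s$ is odd. The additional remark on the convex-hull condition is not needed for the argument but is harmless.
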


\begin{proof}
The ``if'' part is well known; for instance, it is a special case of the results of \cite{Hy4}.
For the ``only if'' part, we apply Theorem~\ref{Th:main} with $\alpha^j=|\beta|e_j$ and $F=\{s\}$.
Then $\sum_{\ell\in F}\beta_\ell=\beta_s$ is odd, whereas $\sum_{\ell\in F}\alpha^j_\ell=|\beta|\delta_{js}$
is even for all $j$, as both $0$ and $|\beta|$ are even. Thus the Theorem implies that $X$ is UMD, with the
asserted estimate for the constant.
\end{proof}

\begin{Cor}\label{Cor:bestconst1}
    Let $X$ be a Banach space and $1<p<\infty$. Then \eqref{eq:mainExample} holds if and only if $X$ is a UMD space, and the best constant $K$ satisfies $K=\beta_p(X)/2$.
\end{Cor}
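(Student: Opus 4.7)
The plan is to sandwich the best constant $K$ between $\beta_p(X)/2$ from above and from below, using the Riesz transform computation already recorded in the introduction for the upper bound, and a direct application of Theorem~\ref{Th:main} for the lower bound.

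First I will observe that the upper estimate $K\leq\beta_p(X)/2$, and with it the ``if'' direction of the equivalence, is essentially free: the chain \eqref{eq:R12comput}, together with the identity $\partial_1\partial_2 u=R_1R_2\triangle u$ and the norm computation $\|R_1R_2\|_{\mathcal{L}(L^p(\R^2,X))}=\beta_p(X)/2$ from \cite{GMS}, delivers it immediately. If $X$ is UMD this gives a finite admissible $K$, and if $X$ is not UMD the right-hand side is $+\infty$ and the bound is vacuous.

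For the matching lower bound and the ``only if'' direction, I will apply Theorem~\ref{Th:main} to the data $n=2$, $N=2$, $\beta=(1,1)$, $\alpha^1=(2,0)$, $\alpha^2=(0,2)$. All multi-indices have equal length $2$, so the one thing to verify is the parity hypothesis: taking $F=\{1\}$, we have $\sum_{\ell\in F}\beta_\ell=\beta_1=1$, which is odd, whereas $\sum_{\ell\in F}\alpha^1_\ell=2$ and $\sum_{\ell\in F}\alpha^2_\ell=0$ are both even, so the required common parity (different from that of $\beta$) holds. Theorem~\ref{Th:main} then forces $X$ to be UMD and gives $\beta_p(X)\leq KN=2K$, i.e.\ $K\geq\beta_p(X)/2$.

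Combining the two inequalities yields $K=\beta_p(X)/2$, and the equivalence with UMD is automatic since this quantity is finite exactly when $X$ has the UMD property. I do not foresee any real obstacle at this stage: the genuinely hard content (the necessity of UMD with the sharp multiplicative constant $N$ in the conclusion, and the non-trivial norm identity for $R_1R_2$) has been absorbed into Theorem~\ref{Th:main} and the result of \cite{GMS} respectively, so what remains is the short bookkeeping verification above.
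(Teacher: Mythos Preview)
Your proposal is correct and follows essentially the same approach as the paper: the upper bound via \eqref{eq:R12comput} and \cite{GMS}, and the lower bound via Theorem~\ref{Th:main} with $n=N=2$, $\beta=(1,1)$, $\alpha^j=2e_j$, $F=\{1\}$. The only cosmetic difference is that the paper routes the lower bound through the preceding corollary (with $n=2$, $\beta=(1,1)$), whereas you invoke Theorem~\ref{Th:main} directly; the verification of the parity hypothesis is identical in both cases.
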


\begin{proof}
We already explained that $K\leq\beta_p(X)/2$ in \eqref{eq:R12comput}; this part of the result is not new.
The converse estimate is a special case of the previous Corollary with $n=2$ and $\beta=(1,1)$.
\end{proof}

There are many characterizations of the UMD property by estimates between $L^p$ norms of two different objects. A novel qualitative feature of the above characterizing conditions is that they involve a sum of several $L^p$ norms on the right hand side.

The proof of Theorem~\ref{Th:main} proceeds via the theory of Fourier multipliers.
We denote by $\widehat{f}$ the Fourier transform of the function $f$, defined as usual by
$$\widehat{f}(\xi)
    = \int_{\R^n} e^{-2 \pi i \xi \cdot x} f(x) dx, \quad \xi \in \R^n,$$
and by $\widecheck{f}$ its inverse, given as above but without the plus
sign in the complex exponential. The Fourier multiplier of a function $m$ is
defined by $T_m f = (m \widehat{f})^{\vee}$.

We write
\begin{equation}\label{eq:mmj}
    m(\xi)= \frac{\xi^\beta}{|\xi|^{|\beta|}}
        \quad \text{ and } \quad
    m_j(\xi)= \frac{\xi^{\alpha^j}}{|\xi|^{|\alpha^j|}}, \qquad \beta, \alpha^j \in \N^n, \ j=1, \dots, N,
\end{equation}
where $\xi^\beta=\xi_1^{\beta_1} \cdots \xi_n^{\beta_n}$.
By well-known properties of the Fourier transform we have that
$$\partial^\beta u = T_m (\Delta^{|\beta|/2}u),$$
where $\Delta=-\partial_1^2 - \dots - \partial_n^2$ is the positive Laplacian in $\R^n$
and $\Delta^{|\beta|/2}u=[(2\pi i|\xi|)^{|\beta|} \widehat{u}]^{\vee}$.
Hence, \eqref{eq:estim2} with $|\alpha^j|=|\beta|$,
$j=1, \dots N$, implies that
\begin{equation}\label{eq:estim3}
\|T_m v\|_{L^p(\R^n,X)}
        \leq K \sum_{j=1}^N \| T_{m_j}v\|_{L^p(\R^n,X)}, \quad v \in \mathcal{S}_0(\R^n,X),
\end{equation}
where $\mathcal{S}_0(\R^n,X)$ is the subspace of the Schwartz class constituted by functions having zero mean value.
Moreover, in order to prove that $X$ is UMD it is more convenient to have an estimate of type \eqref{eq:estim3},
but in terms of discrete Fourier multipliers (see Propositions~\ref{Prop:RnTn} and \ref{Prop:TnTrn} below).

This paper is organized as follows. In Section~\ref{sec:transf} we collect some transference results about
Fourier multipliers and in Section~\ref{sec:proofTh} we present the proof of Theorem~\ref{Th:main}.

\section{Transference results for Fourier multipliers} \label{sec:transf}

Throughout this section we assume that $m$ is a measurable bounded function verifying that
\begin{equation}\label{eq:mk}
    m(k)=\lim_{r \to 0} \frac{1}{|B(k,r)|} \int_{B(k,r)}m(\xi)d\xi,
\end{equation}
exists for every $k \in \Z^n$. The discrete Fourier multiplier
$T_{\widetilde{m}}f$, of a function $f$ defined in the $n$-dimensional torus $\T^n=[-1/2,1/2)^n$, is given by
\begin{equation}\label{eq:Tmdiscret}
    T_{\widetilde{m}}f (t)
        = \sum_{k \in \Z^n} m(k) \widehat{f}(k) e_k(t), \quad t \in \T^n,
\end{equation}
where $e_k(t)=e^{2\pi i t \cdot k}$ and $\widehat{f}(k)=\int_{\T^n} e_k(-t) f(t)dt$.

Next, we present some lemmas which will be very useful in the proof of
Proposition~\ref{Prop:RnTn}. We say that $f$ is a trigonometric polynomial in $X$ if
$$f(t)
    = \sum_{k \in \Z^n} a_k e_k(t), \quad t \in \T^n,$$
with finitely many nonzero coefficients $a_k \in X$. Note that $a_k=\widehat{f}(k)$.

\begin{Lem}\label{Lem:11.20}
    Let $\phi$, $\psi \in \mathcal{S}(\R^n)$ be radial functions satisfying
    $\int_{\R^n} \phi \psi d\xi=1 $.
    Then,
    $$\int_{\T^n}\langle g(t), T_{\widetilde{m}}f(t) \rangle dt
        = \lim_{\eps \to 0} \eps^n \int_{\R^n}\langle \psi(\eps x)g(x) , T_m(\phi(\eps \cdot )f)(x)\rangle dx,$$
    for every trigonometric polynomials $f:\T^n \longrightarrow X$, $g:\T^n \longrightarrow X^*$.
    Here $\langle \cdot , \cdot \rangle$ denotes the pairing between $X$ and its dual $X^*$.
\end{Lem}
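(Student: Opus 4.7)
The plan is to reduce to pure exponentials and evaluate both sides by direct computation, with the limit $\eps\to 0$ picking out only the diagonal contributions. By bilinearity of both sides and the fact that $f,g$ are \emph{finite} trigonometric polynomials $f=\sum_k a_k e_k$, $g=\sum_\ell b_\ell e_\ell$, it suffices to establish the identity for $f=a\,e_k$ and $g=b\,e_\ell$, with $a\in X$, $b\in X^*$, $k,\ell\in\Z^n$. On the left, using $T_{\widetilde m}(ae_k)=m(k)\,a\,e_k$ and the orthogonality $\int_{\T^n}e_k e_\ell\,dt=\delta_{k+\ell,0}$, I obtain $\langle b,a\rangle m(k)$ when $\ell=-k$ and $0$ otherwise.

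For the right-hand side, I would compute
$\widehat{\phi(\eps\cdot)e_k}(\xi)=\eps^{-n}\widehat\phi((\xi-k)/\eps)$, apply $T_m$, and change variables $\xi=k+\eps\eta$ to get
$$T_m(\phi(\eps\cdot)\,a\,e_k)(x)=a\,e_k(x)\int_{\R^n}m(k+\eps\eta)\widehat\phi(\eta)\,e^{2\pi i\eps x\cdot\eta}\,d\eta.$$
Pairing with $\psi(\eps x)\,b\,e_\ell(x)$ and integrating in $x$, the inner $x$-integral gives $\eps^{-n}\widehat\psi(-(k+\ell)/\eps-\eta)$; since $\psi$ is radial Schwartz, $\widehat\psi$ is real and even, and the factor $\eps^n$ in front cancels. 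This leaves
$$\eps^n\!\int_{\R^n}\langle\psi(\eps x)g,T_m(\phi(\eps\cdot)f)\rangle\,dx=\langle b,a\rangle\int_{\R^n}m(k+\eps\eta)\widehat\phi(\eta)\widehat\psi\!\left(\tfrac{k+\ell}{\eps}+\eta\right)d\eta.$$
If $k+\ell\neq 0$, the translate $(k+\ell)/\eps$ of $\widehat\psi$ runs off to infinity; using that $\widehat\psi$ is Schwartz and splitting the $\eta$-integral into the regions $|\eta|\leq|(k+\ell)/\eps|/2$ and its complement (handled via the Schwartz decay of $\widehat\phi$), the whole integral tends to zero and these off-diagonal terms vanish in the limit. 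For the diagonal $\ell=-k$, the expression reduces to $\langle b,a\rangle\int_{\R^n}m(k+\eps\eta)\widehat\phi(\eta)\widehat\psi(\eta)\,d\eta$, and since $\phi,\psi$ are real radial Schwartz, Parseval gives $\int\widehat\phi\widehat\psi=\int\phi\psi=1$, so the natural target of the limit is $\langle b,a\rangle m(k)$, matching the left-hand side.

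The main obstacle is precisely the last pointwise limit $\int m(k+\eps\eta)\widehat\phi(\eta)\widehat\psi(\eta)\,d\eta\to m(k)$ at the specific point $k\in\Z^n$. The hypothesis \eqref{eq:mk} provides only a Lebesgue-differentiation-style average, while the kernel $\widehat\phi\widehat\psi$ is not necessarily nonnegative or radially decreasing. I would handle this by writing the integral, after the change $\zeta=k+\eps\eta$, as convolution of the bounded function $m$ with the approximate identity $\eps^{-n}(\widehat\phi\widehat\psi)(\cdot/\eps)$ evaluated at $k$, dominating $|\widehat\phi\widehat\psi|$ by a radially decreasing Schwartz majorant (coming from the estimate $|\widehat\phi(\eta)\widehat\psi(\eta)|\leq C_N(1+|\eta|)^{-N}$), and then applying the standard approximate-identity argument: the radial decreasing majorant together with $m\in L^\infty$ reduces convergence to the ball-average hypothesis \eqref{eq:mk}. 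With this step justified, summing over the finitely many $(k,\ell)$ appearing in the polynomial expansions of $f,g$ completes the proof.
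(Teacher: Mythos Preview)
Your overall strategy matches the paper's: reduce by bilinearity to $f=a\,e_k$, $g=b\,e_\ell$, and establish the key scalar limit which (after your sign convention $\ell\leftrightarrow-\ell$, and noting $\widehat\psi=\widecheck\psi$ for radial $\psi$) is the paper's identity
\[
\lim_{\eps\to 0}\frac{1}{\eps^n}\int_{\R^n}m(\xi)\,\widehat\phi\Big(\frac{\xi-k}{\eps}\Big)\widecheck\psi\Big(\frac{\xi-\ell}{\eps}\Big)\,d\xi=\chi_{\{k=\ell\}}\,m(k).
\]
Your off-diagonal argument is the same dominated-convergence step as the paper's.

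There is, however, a subtle gap in your diagonal step. You propose to dominate $|\widehat\phi\,\widehat\psi|$ by a radially decreasing Schwartz majorant $\Phi$ and invoke ``the standard approximate-identity argument.'' That standard argument bounds
\[
\Big|\int\big(m(k+\eps\eta)-m(k)\big)\rho(\eta)\,d\eta\Big|\le\int\big|m(k+\eps\eta)-m(k)\big|\,\Phi(\eta)\,d\eta,
\]
and the layer-cake decomposition of the right-hand side produces the averages $|B(k,r)|^{-1}\int_{B(k,r)}|m-m(k)|$. This requires $k$ to be a \emph{Lebesgue point} of $m$. The hypothesis \eqref{eq:mk} is strictly weaker: it only controls the \emph{signed} ball averages $|B(k,r)|^{-1}\int_{B(k,r)}m$, and domination by a radial majorant cannot recover the limit from that alone (a non-radial kernel with radial majorant need not see the limit in \eqref{eq:mk}).

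The paper closes this by exploiting that $\rho:=\widehat\phi\,\widecheck\psi$ is itself \emph{radial} (since $\phi,\psi$ are), not merely radially dominated. Writing $\rho(\eta)=\rho(|\eta|)=-\int_{|\eta|}^\infty\rho'(r)\,dr$ and applying Fubini gives
\[
\int m(k+\eps\eta)\,\rho(|\eta|)\,d\eta=-\int_0^\infty\rho'(r)\,\frac{|B(0,r)|}{|B(k,\eps r)|}\int_{B(k,\eps r)}m(\xi)\,d\xi\,dr,
\]
a superposition of exactly the signed ball averages in \eqref{eq:mk}; dominated convergence (using $m\in L^\infty$ and $\rho\in\mathcal S$) then yields $m(k)\int\rho=m(k)$. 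Your proof becomes correct once you replace the majorant step by this direct use of the radiality of the kernel itself.
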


\begin{proof}
    The key step in this proof is the identity
    \begin{equation}\label{eq:11.21}
        \lim_{\eps \to 0} \frac{1}{\eps^n}
            \int_{\R^n} m(\xi) \widehat{\phi}\Big(\frac{\xi - k}{\eps} \Big) \widecheck{\psi}\Big(\frac{\xi - \ell}{\eps} \Big) d\xi
            = \chi_{\{k=\ell\}} m(k), \quad k, \ell \in \Z^n,
    \end{equation}

    Assume first that $k=\ell$. Since the Fourier transform (and its inverse) of a radial function, is also radial
    (see for example \cite[p. 430]{Ste2}), we consider
    $\rho(|\xi|)=\widehat{\phi}(\xi) \widecheck{\psi}(\xi)$, which satisfies $\int_{\R^n}\rho(|\xi|)d\xi=\int_{\R^n}\widehat\phi\widecheck\psi\,d\xi=\int_{\R^n}\phi\psi\,dx=1$. We have that
    \begin{align*}
        & \frac{1}{\eps^n} \int_{\R^n} m(\xi) \widehat{\phi}\Big(\frac{\xi - k}{\eps} \Big) \widecheck{\psi}\Big(\frac{\xi - k}{\eps} \Big) d\xi
            = \int_{\R^n} m(\eps \eta + k) \rho(|\eta|) d\eta
            = - \int_{\R^n} m(\eps \eta + k) \int_{|\eta|}^\infty \rho'(r) dr d\eta \\
        & \qquad = - \int_0^\infty \rho'(r) \int_{B(0,r)} m(\eps \eta + k)  d\eta dr
            = - \int_0^\infty \rho'(r) \frac{|B(0,r)|}{|B(k,\eps r)|}\int_{B(k,\eps r)} m(\xi)  d\xi dr.
    \end{align*}
    By taking into account $\rho \in \mathcal{S}(\R)$, $m \in L^\infty(R^n)$ and \eqref{eq:mk}, we can apply
    dominated convergence and then integrate by parts to obtain
    \begin{align*}
        & - \lim_{\eps \to 0} \int_0^\infty \rho'(r) \frac{|B(0,r)|}{|B(k,\eps r)|}\int_{B(k,\eps r)} m(\xi)  d\xi dr
            = - m(k) |B(0,1)| \int_0^\infty  r^n \rho'(r)  dr \\
        & \qquad =  m(k) |B(0,1)| \int_0^\infty n r^{n-1} \rho(r) dr
            =  m(k)  \int_0^\infty \int_{\partial B(0,r)} \rho(r) d\sigma  dr
            =  m(k)  \int_{\R^n} \rho(|\xi|) d\xi
            = m(k),
    \end{align*}
    because $|\partial B(0,r)|=|B(0,1)| n r^{n-1}$.

    Suppose now that $k \neq \ell$. This time,
    $$\frac{1}{\eps^n} \int_{\R^n} m(\xi) \widehat{\phi}\Big(\frac{\xi - k}{\eps} \Big) \widecheck{\psi}\Big(\frac{\xi - \ell}{\eps} \Big) d\xi
        = \int_{\R^n} m(\eps \eta + k) \widehat{\phi}(\eta) \widecheck{\psi}\Big(\eta + \frac{k - \ell}{\eps} \Big) d\eta,$$
    where the integrand is bounded by $\|m\|_\infty \|\widecheck\psi\|_\infty |\widehat\phi(\eta)| \in L^1(d\eta)$,
    and converges pointwise to zero as $\eps \to 0$, since $\widecheck{\psi}(\eta + (k - \ell)/\eps)\to 0$.
    Applying again dominated convergence
    we readily see that the integral converges to zero as $\eps \to 0$.

    Take the  trigonometric polynomials $f=\sum_{k \in \Z^n} a_k e_k$ and
    $g=\sum_{\ell \in \Z^n} b_\ell e_\ell$ with $a_k \in X$ and $b_\ell \in X^*$.
    An application of \eqref{eq:11.21} and making use of the usual properties of the Fourier transform, we deduce
    \begin{align*}
        & \int_{\T^n}\langle g(t), T_{\widetilde{m}}f(t) \rangle dt
            = \sum_{\ell, k \in \Z^n}  \langle b_\ell , a_k \rangle m(k) \int_{\T^n} e_\ell(t) e_k(t) dt
            = \sum_{\ell, k \in \Z^n}  \langle b_{-\ell} , a_k \rangle  \chi_{\{k=\ell\}} m(k) \\
        & \qquad = \lim_{\eps \to 0} \eps^n \int_{\R^n}
            \Big \langle \sum_{\ell\in \Z^n} b_{-\ell} \frac{1}{\eps^n} \widecheck{\psi}\Big(\frac{\xi-\ell}{\eps}\Big),
                m(\xi) \sum_{k \in \Z^n} a_k \frac{1}{\eps^n} \widehat{\phi}\Big(\frac{\xi-k}{\eps}\Big) \Big \rangle d\xi\\
        & \qquad = \lim_{\eps \to 0} \eps^n \int_{\R^n}
            \Big \langle \Big[ \psi(\eps \cdot) g \Big]^{\vee}(\xi),
                m(\xi) \Big[  \phi(\eps \cdot) f \Big]^{\wedge}(\xi) \Big \rangle d\xi
        = \lim_{\eps \to 0} \eps^n \int_{\R^n}\langle \psi(\eps x)g(x) , T_m(\phi(\eps \cdot )f)(x)\rangle dx.
    \end{align*}
\end{proof}

\begin{Lem}\label{Lem:11.21}
    Let $1<p<\infty$. Then, for every $\phi \in \mathcal{S}(\R^n)$ and $f \in L^p(\T^n,X)$, we have that
    $$\lim_{\eps \to 0} \eps^{n/p} \| \phi(\eps \cdot) f \|_{L^p(\R^n,X)}
        = \| \phi \|_{L^p(\R^n)} \| f \|_{L^p(\T^n,X)}.$$
\end{Lem}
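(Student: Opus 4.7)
The plan is to extend $f$ periodically from $\T^n$ to all of $\R^n$, decompose the integral over $\R^n$ into a sum of integrals over translates of the fundamental cell $\T^n$, exploit the periodicity of $t\mapsto\|f(t)\|_X^p$, and then recognise the remaining sum as a Riemann sum for $\int_{\R^n}|\phi(y)|^p\,dy$. Concretely, I would first raise to the $p$-th power and write
$$\eps^n\|\phi(\eps\cdot)f\|_{L^p(\R^n,X)}^p=\int_{\T^n}\|f(t)\|_X^p\,S_\eps(t)\,dt,\qquad S_\eps(t):=\eps^n\sum_{k\in\Z^n}|\phi(\eps(t+k))|^p,$$
by the change of variable $x=t+k$ on $k+\T^n$ and the periodicity of $\|f(\cdot)\|_X^p$.

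The next step is to show that, as $\eps\to 0$, one has $S_\eps(t)\to\|\phi\|_{L^p(\R^n)}^p$ uniformly in $t\in\T^n$. For each fixed $t$ the nodes $\eps(t+k)$, $k\in\Z^n$, form a translated lattice with spacing $\eps$, so $S_\eps(t)$ is the Riemann sum associated to the continuous rapidly-decreasing function $|\phi|^p$ on this lattice. I would combine the uniform continuity of $|\phi|^p$ on compacts (giving uniform convergence on a ball $|y|\le R$) with the Schwartz decay $|\phi(y)|^p\le C_M(1+|y|)^{-M}$ for $M>n$ (which makes the tail $\eps^n\sum_{|k|>R/\eps}|\phi(\eps(t+k))|^p$ dominated by a multiple of $\int_{|y|>cR}(1+|y|)^{-M}\,dy=O(R^{n-M})$ uniformly in $\eps\le 1$ and $t\in\T^n$). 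Choosing $R$ large and then $\eps$ small gives the desired uniform limit, together with a uniform bound $\sup_{\eps\le 1,\,t\in\T^n}S_\eps(t)<\infty$.

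The final step is to pass the limit under the integral sign. Since $\T^n$ has finite measure, $\|f(\cdot)\|_X^p\in L^1(\T^n)$, and $S_\eps$ converges to the constant $\|\phi\|_{L^p(\R^n)}^p$ uniformly on $\T^n$ with a uniform upper bound, dominated (in fact bounded) convergence yields
$$\lim_{\eps\to 0}\int_{\T^n}\|f(t)\|_X^p\,S_\eps(t)\,dt=\|\phi\|_{L^p(\R^n)}^p\,\|f\|_{L^p(\T^n,X)}^p,$$
and taking $p$-th roots proves the claim.

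The main obstacle is the uniform Riemann-sum convergence: while the pointwise convergence $S_\eps(t)\to\|\phi\|_p^p$ is immediate from standard Riemann-sum theory for each fixed $t$, I need uniformity in $t\in\T^n$ (or at least a uniform dominating function) to swap the limit with the $\T^n$-integral against the arbitrary $L^1$ weight $\|f(\cdot)\|_X^p$. The Schwartz decay of $\phi$ makes this a routine but slightly delicate estimate: the compact-part approximation and the tail estimate must both be arranged to hold uniformly in $t$, which is the only non-formal ingredient of the argument.
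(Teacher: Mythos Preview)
Your proposal is correct and follows essentially the same route as the paper: decompose $\R^n$ into translates $\T^n+k$, use periodicity of $\|f\|_X^p$ to collapse the integral to $\int_{\T^n}\|f(t)\|_X^p\,S_\eps(t)\,dt$ with $S_\eps$ the Riemann sum, and then apply dominated convergence. The only difference is that the paper is terser---it just notes that $S_\eps$ is a Riemann sum uniformly bounded in $t$ and $\eps$ and invokes dominated convergence directly (pointwise convergence plus uniform bound suffice), whereas you additionally verify uniform convergence of $S_\eps$; this is a bit more than strictly needed but certainly correct.
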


\begin{proof}
    The periodicity of the function $f$ allow us to write
    \begin{align*}
        \eps^n \int_{\R^n} \| \phi(\eps x) f(x)\|_X^p dx
        & = \eps^n \sum_{k \in \Z^n} \int_{\T^n+k} |\phi(\eps x)|^p \|f(x)\|_X^p dx \\
        & = \int_{\T^n} \Big(\eps^n \sum_{k \in \Z^n} |\phi(\eps (x-k))|^p \Big) \|f(x)\|_X^p dx,
    \end{align*}
    where the quantity in parentheses is a Riemann sum of the function $|\phi(\cdot)|^p$,
    which is uniformly bounded in $x$ and $\eps$. Now, this lemma is a simple consequence of the
    dominated convergence theorem.
\end{proof}

\begin{Lem}\label{Lem:11.23}
    Let $1<p<\infty$ and $p'=p/(p-1)$. Then, for each $f \in \mathcal{S}(\R^n,X)$, we have that
    $$\lim_{\eps \to 0} \eps^{n/p'} \Big\| \sum_{k \in \Z^n} \widehat{f}(\eps k) e_k \Big\|_{L^p(\T^n,X)}
        =  \| f \|_{L^p(\R^n,X)}.$$
\end{Lem}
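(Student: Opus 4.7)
The plan is to apply the Poisson summation formula to recognise the trigonometric series as a periodisation of a rescaling of $f$, and then let $\eps\to 0$. Fix $t\in\T^n$ and set $h(\xi):=\widehat{f}(\eps\xi)\,e^{2\pi i\xi\cdot t}$; this is a Schwartz function on $\R^n$ with values in $X$. The vector-valued Poisson summation formula (which follows from the scalar case by testing against $X^*$), together with a direct computation giving $\widehat{h}(m)=\eps^{-n}f((t-m)/\eps)$, yields the pointwise identity
\[
  \sum_{k\in\Z^n}\widehat{f}(\eps k)\,e_k(t)
  \;=\;\frac{1}{\eps^n}\sum_{m\in\Z^n}f\!\left(\frac{t-m}{\eps}\right),\qquad t\in\T^n.
\]

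Next I would take $L^p(\T^n,X)$-norms of both sides, factor out $\eps^{-n}$, and change variables $s=t/\eps$ (so that $\T^n$ becomes $\T^n/\eps$ and $dt=\eps^n\,ds$). Since $n/p'+n/p=n$, the $\eps$-powers cancel exactly, yielding
\[
  \eps^{n/p'}\Big\|\sum_{k\in\Z^n}\widehat{f}(\eps k)\,e_k\Big\|_{L^p(\T^n,X)}
  \;=\;\Big\|\sum_{m\in\Z^n}f(\,\cdot\,-m/\eps)\Big\|_{L^p(\T^n/\eps,X)}.
\]

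It then suffices to prove that the right-hand side converges to $\|f\|_{L^p(\R^n,X)}$. I would split off the term $m=0$: since the cubes $\T^n/\eps$ exhaust $\R^n$, monotone convergence gives $\|f\|_{L^p(\T^n/\eps,X)}\to\|f\|_{L^p(\R^n,X)}$. For the off-diagonal terms, the triangle inequality in $L^p$ bounds them by $\sum_{m\neq 0}\|f\|_{L^p((\T^n-m)/\eps,X)}$. The translated cubes $(\T^n-m)/\eps$ recede to infinity as $\eps\to 0$: any $t\in\T^n$ and $m\neq 0$ satisfies $\|(t-m)/\eps\|_\infty\geq(\|m\|_\infty-1/2)/\eps$. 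Using Schwartz decay $\|f(y)\|_X\leq C_N(1+|y|)^{-N}$ with $N$ chosen large enough relative to $n$ and $p$, each summand is bounded by a high positive power of $\eps$ times $\|m\|_\infty^{-N}$, so the whole series tends to zero.

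The only really conceptual step is the Poisson summation identity; the main obstacle to a clean write-up is the careful bookkeeping of the $\eps$-powers coming from the periodisation and the rescaling (which is precisely what forces the normalisation $\eps^{n/p'}$ in the statement). Once those are in place, the remaining tail estimates are routine for $f\in\mathcal{S}(\R^n,X)$.
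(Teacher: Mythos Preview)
Your proposal is correct and follows essentially the same route as the paper: Poisson summation to rewrite the trigonometric series as a periodisation $\eps^{-n}\sum_m f((t-m)/\eps)$, the change of variables $t\mapsto \eps s$ absorbing the $\eps$-powers, then the split into the $m=0$ term (which gives $\|f\|_{L^p(\eps^{-1}\T^n,X)}\to\|f\|_{L^p(\R^n,X)}$) and the tail $\sum_{m\neq 0}$ controlled by Schwartz decay. The only cosmetic differences are the sign in the translation ($+k/\eps$ versus $-m/\eps$) and your explicit justification of vector-valued Poisson summation by duality, which the paper simply cites.
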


\begin{proof}
Recall that we identify $\T^n=[-1/2,1/2)^n$; whence the notation
$\eps^{-1}\T^n=[-1/(2\eps),1/(2\eps))^n$ is meaningful below.
    Define $f_\eps=\eps^{-n}f(\cdot/\eps)$. The Poisson summation formula (see \cite[Theorem 3.1.17]{Graf}),
    tells us that
    $$\sum_{k \in \Z^n} \widehat{f}(\eps k) e_k(t)
        = \sum_{k \in \Z^n} \widehat{f_\eps}(k) e_k(t)
        = \sum_{k \in \Z^n} f_\eps(t+k)
        = \frac{1}{\eps^n}\sum_{k \in \Z^n} f\Big(\frac{t}{\eps}+\frac{k}{\eps}\Big), \quad t \in \T^n.$$
    Then, making the change of variables $u=t/\eps$,
    \begin{align*}
        \eps^{n/p'} \Big\| \sum_{k \in \Z^n} \widehat{f}(\eps k) e_k \Big\|_{L^p(\T^n,X)}
            & = \eps^{-n/p} \Big\| \sum_{k \in \Z^n} f\Big(\frac{\cdot}{\eps}+\frac{k}{\eps}\Big) \Big\|_{L^p(\T^n,X)}
              = \Big\| \sum_{k \in \Z^n} f\Big(\cdot +\frac{k}{\eps}\Big) \Big\|_{L^p(\eps^{-1}\T^n,X)} \\
            & =\|f\|_{L^p(\eps^{-1}\T^n,X)}
                + \mathcal{O}\Big( \sum_{k \in \Z^n \setminus \{0\}} \Big\|  f\Big(\cdot +\frac{k}{\eps}\Big) \Big\|_{L^p(\eps^{-1}\T^n,X)} \Big).
    \end{align*}
    It is obvious that
    $$\|f\|_{L^p(\eps^{-1}\T^n,X)} \longrightarrow \| f \|_{L^p(\R^n,X)}, \quad  \text{as } \eps \to 0.$$
    We analyze the second term. Since $f \in \mathcal{S}(\R^n,X)$, for every $N \in \N$, we have that
    $$\Big\| f\Big(u +\frac{k}{\eps} \Big)\Big\|_X
        \leq C \Big(\frac{\eps}{|k|}\Big)^{N}, \quad u \in \eps^{-1}\T^n.$$
    Hence, it is enough to take $N>n$ to see that
    \begin{equation*}
\sum_{k \in \Z^n \setminus \{0\}} \Big\|  f\Big(\cdot +\frac{k}{\eps}\Big) \Big\|_{L^p(\eps^{-1}\T^n,X)}
        \leq C \eps^{N-n/p} \sum_{k \in \Z^n \setminus \{0\}} \frac{1}{|k|^N}
        \leq C \eps^{N-n/p} \longrightarrow 0, \quad \text{as } \eps \to 0.\qedhere
\end{equation*}
\end{proof}

We now establish our transference result between $\R^n$ and $\T^n$.
We say that $m$ is a homogeneous function (of order zero) when
$m(\lambda \xi)=m(\xi)$, $\xi \in \R^n$, $\lambda>0$. We denote
$$L^p_0(\T^n,X)=\Big\{f \in L^p(\T^n,X) : \int_{\T^n}f dt = 0\Big\}.$$

\begin{Prop}\label{Prop:RnTn}
    Let $1<p<\infty$, $N \in \N$ and $m$, $m_j \in C^\infty(\R^n \setminus \{0\})$, $j=1, \dots, N$,
    be homogeneous functions.  Then, the following assertions are equivalent:
    \begin{itemize}
        \item[$(i)$] $\displaystyle \|T_m f\|_{L^p(\R^n,X)}
                            \leq K \sum_{j=1}^N \| T_{m_j}f\|_{L^p(\R^n,X)}, \quad f \in L^p(\R^n,X),$
        \item[$(ii)$] $\displaystyle \|T_{\widetilde{m}} f\|_{L^p(\T^n,X)}
                            \leq K \sum_{j=1}^N \| T_{\widetilde{m}_j}f\|_{L^p(\T^n,X)}, \quad f \in L^p_0(\T^n,X).$
    \end{itemize}
\end{Prop}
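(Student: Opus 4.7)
The plan is to prove the two implications separately, exploiting in both cases the identity $m(\eps k)=m(k)$ for $k\in\Z^n\setminus\{0\}$ and $\eps>0$ that follows from zero-homogeneity. Lemma~\ref{Lem:11.23} drives $(ii)\Rightarrow(i)$, whereas Lemmas~\ref{Lem:11.20} and~\ref{Lem:11.21} drive $(i)\Rightarrow(ii)$.

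For $(ii)\Rightarrow(i)$, I work on the dense class of $f\in\mathcal{S}(\R^n,X)$ with $\widehat f$ smooth, compactly supported, and vanishing near the origin. For such $f$, the finite trigonometric polynomial $F_\eps:=\sum_{k\in\Z^n}\widehat f(\eps k)e_k$ lies in $L^p_0(\T^n,X)$, and homogeneity gives $T_{\widetilde{m}}F_\eps=\sum_k\widehat{T_m f}(\eps k)e_k$, with an analogous identity for each $m_j$. Since $T_mf$ and $T_{m_j}f$ remain in $\mathcal{S}(\R^n,X)$, Lemma~\ref{Lem:11.23} yields $\eps^{n/p'}\|T_{\widetilde{m}}F_\eps\|_p\to\|T_mf\|_p$ and $\eps^{n/p'}\|T_{\widetilde{m}_j}F_\eps\|_p\to\|T_{m_j}f\|_p$. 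Applying $(ii)$ to $F_\eps$, multiplying by $\eps^{n/p'}$, and passing to the limit delivers $(i)$ on this dense class, hence everywhere.

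For $(i)\Rightarrow(ii)$, I argue by duality: let $f\in L^p_0(\T^n,X)$ and $g\in L^{p'}(\T^n,X^*)$ be trigonometric polynomials with $\|g\|_{p'}\leq 1$, and choose radial Schwartz $\phi,\psi$ (explicitly two Gaussians with matched widths) satisfying $\int\phi\psi\,dx=1$ and $\|\phi\|_p\|\psi\|_{p'}=1$, the latter being the H\"older equality case. With $F_\eps:=\phi(\eps\cdot)f$, Lemma~\ref{Lem:11.20}, H\"older, and $(i)$ combine to give
\[
|\langle g,T_{\widetilde{m}}f\rangle|=\lim_{\eps\to 0}\eps^n\bigl|\langle\psi(\eps\cdot)g,T_mF_\eps\rangle\bigr|\leq K\limsup_{\eps\to 0}\eps^{n/p'}\|\psi(\eps\cdot)g\|_{p'}\sum_{j=1}^N\eps^{n/p}\|T_{m_j}F_\eps\|_p.
\]
Lemma~\ref{Lem:11.21} handles the first factor: $\eps^{n/p'}\|\psi(\eps\cdot)g\|_{p'}\to\|\psi\|_{p'}\|g\|_{p'}$. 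The second factor calls for the single-multiplier transference
\[
\lim_{\eps\to 0}\eps^{n/p}\|T_{m'}(\phi(\eps\cdot)f)\|_{L^p(\R^n,X)}=\|\phi\|_p\|T_{\widetilde{m'}}f\|_{L^p(\T^n,X)},
\]
valid for each smooth zero-homogeneous $m'$. Granted this with $m'=m_j$, the chain closes at $K\|\phi\|_p\|\psi\|_{p'}\|g\|_{p'}\sum_j\|T_{\widetilde{m}_j}f\|_p=K\|g\|_{p'}\sum_j\|T_{\widetilde{m}_j}f\|_p$, and the supremum over $g$ produces $(ii)$.

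The auxiliary transference is the main technical obstacle. For $f=\sum_ka_ke_k$ a finite trigonometric polynomial, an explicit Fourier computation yields
\[
T_{m'}(\phi(\eps\cdot)f)(x)-\phi(\eps x)T_{\widetilde{m'}}f(x)=\sum_{k\in\Z^n\setminus\{0\}}a_ke_k(x)\int_{\R^n}\bigl[m'(\eps\eta+k)-m'(k)\bigr]\widehat\phi(\eta)e^{2\pi i\eps x\cdot\eta}\,d\eta.
\]
After the substitution $y=\eps x$, smoothness of $m'$ near each $k\neq 0$ together with the rapid decay of $\widehat\phi$ (via dominated convergence applied to $[m'(\eps\eta+k)-m'(k)]\widehat\phi(\eta)$ and its derivatives in $\eta$) makes each of the finitely many terms $o(\eps^{-n/p})$ in $L^p(\R^n,X)$. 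Hence $\eps^{n/p}\|T_{m'}(\phi(\eps\cdot)f)-\phi(\eps\cdot)T_{\widetilde{m'}}f\|_p\to 0$, and Lemma~\ref{Lem:11.21} applied to $\phi(\eps\cdot)T_{\widetilde{m'}}f$ finishes the argument.
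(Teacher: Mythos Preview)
Your overall architecture matches the paper's exactly: $(ii)\Rightarrow(i)$ via Lemma~\ref{Lem:11.23} on the dense class of Schwartz $f$ with $\widehat f$ compactly supported away from $0$, and $(i)\Rightarrow(ii)$ via Lemma~\ref{Lem:11.20}, H\"older, the Gaussian pair $\phi(x)=e^{-\pi|x|^2/p}$, $\psi(x)=e^{-\pi|x|^2/p'}$, Lemma~\ref{Lem:11.21}, and the reduction to the single-multiplier transference
\[
\lim_{\eps\to 0}\eps^{n/p}\bigl\|T_{m'}(\phi(\eps\cdot)f)-\phi(\eps\cdot)T_{\widetilde{m'}}f\bigr\|_{L^p(\R^n,X)}=0.
\]
Up to this point your argument and the paper's coincide line by line.

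The gap is in your justification of this last limit. After your substitution $y=\eps x$, the quantity to control is $\|\widecheck{g_\eps}\|_{L^p(\R^n)}$ with $g_\eps(\eta)=[m'(\eps\eta+k)-m'(k)]\widehat\phi(\eta)$, and you propose to get $L^p$ decay of $\widecheck{g_\eps}$ from $L^1$ control of $\partial^\gamma g_\eps$ via dominated convergence. But the Leibniz expansion produces terms $\eps^{|\delta|}(\partial^\delta m')(\eps\eta+k)\,\partial^{\gamma-\delta}\widehat\phi(\eta)$, and since $m'$ is homogeneous of degree zero, $\partial^\delta m'$ is homogeneous of degree $-|\delta|$ and hence blows up like $|\eps\eta+k|^{-|\delta|}$ near $\eta=-k/\eps$. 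For $|\delta|\geq n$ this singularity is not locally integrable, so there is no $\eps$-independent $L^1$ majorant and dominated convergence, taken literally, fails. The rapid decay of $\widehat\phi$ does not help here: its value at $-k/\eps$ is small, but a dominating function must be uniform in~$\eps$.

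The paper closes this gap by a dyadic decomposition $\widehat\phi=\sum_\ell\widehat\phi\,\theta_\ell$; for each $\ell$ the piece $\widehat\phi_\ell((\cdot-k)/\eps)$ is supported in a ball of radius $2^{\ell+1}\eps$ around $k$, which (for small $\eps$) stays away from the origin, and the localized multiplier $(m'(\cdot)-m'(k))\Theta_\ell((\cdot-k)/\eps)$ is then estimated by Mihlin's theorem, giving the scalar $L^p$ bound $C\,2^\ell\eps$. The rapid decay $\|\phi_\ell\|_{L^p}\leq C\,2^{-\rho\ell}$ sums the series and yields the limit. Your sketch can be repaired along similar lines (e.g.\ by introducing a smooth cutoff $\chi(\eps\eta)$ near $k$ and invoking the scalar $L^p$-boundedness of $T_{m'}$ on the remainder), but as written the ``dominated convergence on derivatives'' step does not go through.
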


\begin{proof}[Proof of Proposition~\ref{Prop:RnTn}, $(ii) \Rightarrow (i)$]
    Let $f \in \mathcal{S}(\R^n,X)$ be such that $\supp \widehat{f} \subset \R^n \setminus \{0\}$ is compact.
    Observe that this class of functions is dense in $L^p(\R^n,X)$. By the smoothness of the
    multiplier $m$, we also have that $T_m f \in \mathcal{S}(\R^n,X)$. Hence, we can apply Lemma~\ref{Lem:11.23}
    and get
    \begin{align*}
        \| T_m f \|_{L^p(\R^n,X)}
            = & \lim_{\eps \to 0} \eps^{n/p'} \Big\| \sum_{k \in \Z^n} \widehat{T_m f}(\eps k) e_k \Big\|_{L^p(\T^n,X)}
            =  \lim_{\eps \to 0} \eps^{n/p'} \Big\| \sum_{k \in \Z^n} m(k) \widehat{f}(\eps k) e_k \Big\|_{L^p(\T^n,X)} \\
            = & \lim_{\eps \to 0} \eps^{n/p'} \Big\| T_{\widetilde{m}}\Big(\sum_{k \in \Z^n} \widehat{f}(\eps k) e_k \Big) \Big\|_{L^p(\T^n,X)}.
    \end{align*}
    Notice that in the second equality we have applied that $m$ is a homogeneous function.
    Now, since the function between parentheses is in $L^p_0(\T^n,X)$, we use hypothesis $(ii)$
    and again Lemma~\ref{Lem:11.23} to conclude that
    \begin{align*}
        \| T_m f \|_{L^p(\R^n,X)}
            \leq & K  \lim_{\eps \to 0} \eps^{n/p'} \sum_{j=1}^N\Big\| T_{\widetilde{m}_j}\Big(\sum_{k \in \Z^n} \widehat{f}(\eps k) e_k \Big) \Big\|_{L^p(\T^n,X)} \\
            = & K  \lim_{\eps \to 0} \sum_{j=1}^N\eps^{n/p'} \Big\| \sum_{k \in \Z^n} \widehat{T_{m_j}f}(\eps k) e_k \Big\|_{L^p(\T^n,X)}
            =  K \sum_{j=1}^N \| T_{m_j}f \|_{L^p(\R^n,X)}.\qedhere
    \end{align*}
\end{proof}

\begin{proof}[Proof of Proposition~\ref{Prop:RnTn}, $(i) \Rightarrow (ii)$]
    Let $f$ be a trigonometric polynomial with $\widehat{f}(0)=0$. This family of functions is
    dense in $L^p_0(\T^n,X)$. We can write $f= \sum_{k \in \Z^n} \widehat{f}(k)e_k=\sum_{k\in I}\widehat{f}(k)e_k$, where
    $I=\{ k \in \Z^n : \widehat{f}(k) \neq 0\}$ is a finite set such that $0 \notin I$.

    We choose the auxiliary functions $\phi(x)=e^{-\pi|x|^2/p}$ and $\psi(x)=e^{-\pi|x|^2/p'}$, which
    clearly satisfy (see, for instance, \cite[Proposition 8.24]{Foll})
    $$1
        =\|\phi\|_{L^p(\R^n)}
        =\|\psi\|_{L^{p'}(\R^n)}
        =\int_{\R^n} \phi(x) \psi(x) dx
        =\int_{\R^n} \widehat{\phi}(\xi) \widecheck{\psi}(\xi) d\xi.$$
    Then, by Lemma~\ref{Lem:11.20}, Hölder's inequality,
    Lemma~\ref{Lem:11.21} and hypothesis $(i)$ we arrive at
    \begin{align*}
        \|T_{\widetilde{m}} f\|_{L^p(\T^n,X)}
            = & \sup_{g} \int_{\T^n}\langle g(t), T_{\widetilde{m}}f(t) \rangle dt
            =   \sup_{g} \lim_{\eps \to 0} \eps^n \int_{\R^n}\langle \psi(\eps x)g(x) , T_m(\phi(\eps \cdot )f)(x)\rangle dx \\
            \leq & \sup_{g} \lim_{\eps \to 0} \eps^{n/p'}  \|\psi(\eps \cdot)g \|_{L^{p'}(\R^n,X^*)} \eps^{n/p} \|T_m(\phi(\eps \cdot )f)\|_{L^{p}(\R^n,X)} \\
            \leq & K\lim_{\eps \to 0}   \sum_{j=1}^N \eps^{n/p} \|T_{m_j}(\phi(\eps \cdot )f)\|_{L^{p}(\R^n,X)},
    \end{align*}
    where the supremum is over all trigonometric polynomials $g : \T^n \to X^*$ such that
    $\|g\|_{L^{p'}(\T^n,X^*)} \leq 1$.

    To finish the reasoning we are going to prove that
    \begin{equation}\label{eq:goal1}
        \lim_{\eps \to 0} \eps^{n/p} \|T_{m_j}(\phi(\eps \cdot )f)\|_{L^{p}(\R^n,X)}
            \leq \|T_{\widetilde{m}_j}f\|_{L^{p}(\T^n,X)}, \quad j=1, \dots, N.
    \end{equation}
    For simplicity, from now on, we just write $m$ instead of $m_j$, $j=1, \dots, N$.

    Take $\theta_0 \in C^\infty_c(\R^n)$ such that $\supp \theta_0 \subset B(0,2)$,
    $\theta_0 \equiv 1$ in $B(0,1)$ and consider
    $$\theta_\ell (\xi)
        = \theta_0 \Big( \frac{\xi}{2^\ell} \Big) - \theta_0 \Big( \frac{\xi}{2^{\ell-1}} \Big), \quad \xi \in \R^n, \ \ell \geq 1.$$
    Note that the function $\theta_\ell$ is supported in the annulus $B(0,2^{\ell+1}) \setminus B(0,2^{\ell-1})$.
    Then, we have the partition of the unity
    $$1= \sum_{\ell \in \N} \theta_\ell (\xi), \quad \xi \in \R^n,$$
    where for each $\xi$ there exists at most two nonzero terms in the above sum
    (see \cite[p. 242]{Ste2} for details). Let also $\Theta_\ell =\theta_{\ell-1}+\theta_{\ell}+\theta_{\ell+1}$
    ($\theta_{-1}=0$), so that $\Theta_\ell$ is supported in $B(0,2^{\ell+2})$ for $\ell=0,1$,
    and in $B(0,2^{\ell+2})\setminus B(0,2^{\ell-2})$ for $\ell\geq 2$;
    and $\Theta_\ell=1$ on the support of $\theta_\ell$.

     For every $\ell \in \N$, we define $\phi_\ell \in \mathcal{S}(\R^n)$
    to be the function such that $\widehat{\phi}_\ell = \widehat{\phi} \ \theta_\ell$.
    Now, we can write
    \begin{align*}
        T_m(\phi(\eps \cdot )f)
            & = \sum_{k \in \Z^n} \widehat{f}(k) T_m(\phi(\eps \cdot )e_k)
            = \sum_{k \in \Z^n} \widehat{f}(k) \Big[ m(\xi) \frac{1}{\eps^n} \widehat{\phi} \Big( \frac{\xi-k}{\eps} \Big)  \Big]^{\vee} \\
            & = \sum_{\ell \in \N} \sum_{k \in \Z^n} \widehat{f}(k) \Big[ m(\xi) \frac{1}{\eps^n} \widehat{\phi}_\ell \Big( \frac{\xi-k}{\eps} \Big)  \Big]^{\vee}
            = \sum_{\ell \in \N} T_m(\phi_\ell(\eps \cdot )f).
    \end{align*}
    Since $\widehat{\phi}_\ell((\cdot - k)/\eps)$ is supported in $B(k,2^{\ell+1}\eps)$, it suggests that we may replace
    $m(\xi)$ by $m(k)$ with the advantage that
    $$ \sum_{\ell \in \N} \sum_{k \in \Z^n} \widehat{f}(k) \Big[ m(k) \widehat{\phi_\ell(\eps \cdot)e_k}(\xi)  \Big]^{\vee}
        = \sum_{\ell \in \N} \phi_\ell(\eps \cdot) T_{\widetilde{m}} f
        = \phi(\eps \cdot) T_{\widetilde{m}} f.$$
    With this motivation in mind, we write
    \begin{align*}
        \lim_{\eps \to 0} \eps^{n/p} \|T_{m}(\phi(\eps \cdot )f)\|_{L^{p}(\R^n,X)}
            \leq &  \lim_{\eps \to 0} \eps^{n/p} \|T_{m}(\phi(\eps \cdot )f) - \phi(\eps \cdot) T_{\widetilde{m}} f\|_{L^{p}(\R^n,X)} \\
            & +  \lim_{\eps \to 0} \eps^{n/p} \|\phi(\eps \cdot) T_{\widetilde{m}} f\|_{L^{p}(\R^n,X)}.
    \end{align*}
    Thus, by Lemma~\ref{Lem:11.21}, to establish \eqref{eq:goal1} we only need to see that
    the first term converges to zero. Even more, it is sufficient to show that
    \begin{equation}\label{eq:goal2}
        \lim_{\eps \to 0} \sum_{\ell \in \N} \eps^{n/p} \|T_{m(\cdot)-m(k)}(\phi_\ell(\eps \cdot )e_k)\|_{L^{p}(\R^n)} = 0, \quad k \in I.
    \end{equation}
    Notice that \eqref{eq:goal2} is a scalar property, so the Banach space $X$ does not play any role hereinafter.

We first observe an easy estimate:
\begin{equation*}
\begin{split}
  \eps^{n/p} \|T_{m(\cdot)-m(k)}(\phi_\ell(\eps \cdot )e_k)\|_{L^{p}(\R^n)}
 &\leq \eps^{n/p} \Big(\|T_m\|_{L^p(\R^n)\longrightarrow L^p(\R^n)}+\|m\|_{L^\infty(\R^n)}\Big)\|\phi_\ell(\eps \cdot )e_k\|_{L^{p}(\R^n)} \\
 &\leq C\|\phi_\ell\|_{L^{p}(\R^n)}, \\
\end{split}
\end{equation*}
where moreover we
    claim that
    \begin{equation}\label{eq:claim2}
        \|\phi_\ell\|_{L^p(\R^n)}
            \leq C 2^{-\rho \ell}, \quad \rho \in \N.
    \end{equation}
    Indeed, fixing some $\nu>n/(2p)$ and any (large) $\mu\in\N$, we have
    \begin{align*}
        \|\phi_\ell\|_{L^p(\R^n)}
            & \leq \|(1+|x|^2)^\nu \phi_\ell\|_{L^\infty(\R^n)} \| (1+|x|^2)^{-\nu} \|_{L^p(\R^n)} \\
            & \leq C \Big\|\Big[(1+ \Delta)^\nu \widehat{\phi_\ell}\Big]^{\vee}\Big\|_{L^\infty(\R^n)}
              \leq C \| (1+ \Delta)^\nu \widehat{\phi_\ell} \|_{L^1(\R^n)}
              \leq C \sum_{|\gamma| \leq 2 \nu } \| \partial^\gamma \widehat{\phi_\ell} \|_{L^1(\R^n)}\\
            &\leq C \sum_{|\gamma| \leq 2 \nu }\sum_{\delta\leq\gamma}\binom{\gamma}{\delta}
                \| \partial^{\delta} \widehat{\phi}\cdot \partial^{\gamma-\delta}\theta_{\ell} \|_{L^1(\R^n)}
            \leq C \sum_{|\delta| \leq 2 \nu }
                \| \partial^{\delta} \widehat{\phi}\|_{L^1(B(0,2^{\ell+1})\setminus B(0,2^{\ell-1}))} \\
            & \leq C 2^{(n-\mu) \ell} \sum_{|\delta| \leq 2 \nu } \| |\xi|^\mu \partial^\delta \widehat{\phi}(\xi) \|_{L^\infty(\R^n)}
             \leq C 2^{(n-\mu) \ell},
    \end{align*}
    since $\phi$ and thus $\widehat\phi$ belong to $\mathcal{S}(\R^n)$.

By dominated convergence, it suffices to show that each term in \eqref{eq:goal2}, for a fixed $\ell\in\N$, tends to zero as $\eps\to 0$. 

    Fix $k \in I$, $\ell \in \N$ and $0<\eps<2^{-\ell-3}$.
    If we define $M_\ell^{\eps,k} = (m(\cdot)-m(k))\Theta_\ell((\cdot-k)/\eps)$, it is clear that
    $$T_{m(\cdot)-m(k)}(\phi_\ell(\eps \cdot )e_k)
        = T_{M_\ell^{\eps,k}}(\phi_\ell(\eps \cdot )e_k),$$
    and
    \begin{align}\label{eq:phiTM}
        \eps^{n/p} \|T_{M_\ell^{\eps,k}}(\phi_\ell(\eps \cdot )e_k)\|_{L^{p}(\R^n)}
            \leq & \| \phi_\ell \|_{L^{p}(\R^n)} \|T_{M_\ell^{\eps,k}}\|_{L^{p}(\R^n) \longrightarrow L^{p}(\R^n)}.
    \end{align}
We already estimate $\| \phi_\ell \|_{L^{p}(\R^n)}$ above, and we now turn to the multiplier norm.
     First of all, it is more convenient to consider the new multiplier
    $$\widetilde{M_\ell^{\eps,k}}(\eta)
        = M_\ell^{\eps,k}(2^\ell \eps \eta + k) = [m(2^\ell \eps \eta + k) - m(k)] \Theta_\ell (2^\ell \eta), \quad \eta \in \R^n,$$
    because the $L^p$-norm of Fourier multipliers is invariant under dilations and translations
    in the multiplier function (see for example \cite[(2.5.14) and (2.5.15)]{Graf}).
    Note that $\supp \Theta_\ell (2^\ell \cdot) \subset B(0,4)$, so that
    $|k+2^\ell\eps\eta|\geq|k|- |\eta| \cdot 2^\ell\eps \geq 1- 4 \cdot 1/8=1/2$,
    and thus $m$ is only evaluated in $B(0,1/2)^c$, where it is $C^\infty$. Also note that $\Theta_\ell(2^\ell\eta)=\Theta_2(2^2\eta)$ for all $\ell\geq 2$.

We first estimate $\widetilde{M_\ell^{\eps,k}}$ pointwise.
By applying the mean value theorem:
    \begin{align*}
        |\widetilde{M_\ell^{\eps,k}}(\eta)|
        &\leq \max_{z\in[k,k+2^\ell\eps\eta]} |\nabla m(z)|\cdot 2^\ell\eps|\eta|\cdot |\Theta_\ell(2^\ell\eta)| \\
            & \leq 2^{\ell+2} \eps \sup_{|z|\geq 1/2} |\nabla m(z)|
            \leq C 2^{\ell} \eps, \quad \eta \in \R^n,
    \end{align*}
    where the last bound follows from the homogeneity of $m$ (see for example \cite[p. 366]{Graf}). (Here
     $[x,y]$ stands for the segment connecting points $x$, $y \in \R^n$ and
    $C$ does not depend on $\eta$, $\ell$ or $\eps$.)

    Next, we estimate the size of the derivatives of $\widetilde{M_\ell^{\eps,k}}$. From the Leibniz rule, 
    we get for every $\gamma \in \N^n \setminus \{0\}$, such that $|\gamma| \leq n+1$,
    \begin{equation*}
\begin{split}
         |\partial^\gamma \widetilde{M_\ell^{\eps,k}}(\eta)|
          &  \leq   \Big\{ |m(2^{\ell}\eps \eta + k) - m(k)|\cdot |(\partial^\gamma) (\Theta_\ell)(2^{\ell}\cdot))(\eta)| \\
                 &  \qquad\qquad+ \sum_{0 \neq \tau \leq \gamma} \binom{\gamma}{\tau} (2^\ell\eps)^{|\tau|} |(\partial^\tau m)(2^{\ell}\eps \eta + k)|
                  \cdot |(\partial^{\gamma - \tau})( \Theta_\ell) (2^{\ell}\cdot))(\eta)|\Big\} \\
           & \leq   C\Big\{ 2^{\ell} \eps+\sum_{0\neq\tau\leq\gamma}(2^\ell\eps)^{|\tau|}\Big\}\leq C 2^\ell\eps, \quad \eta \in \R^n,
\end{split}
\end{equation*}
    for certain constant $C$ independent of $\eta$, $\ell$ and $\eps$. The same bound holds for $|\eta|^{|\gamma|}|\partial^\gamma \widetilde{M_\ell^{\eps,k}}(\eta)|$, since $|\eta|\leq 4$ on the support of $ \widetilde{M_\ell^{\eps,k}}$. Hence, Mihlin's multiplier theorem
    (see for instance \cite[Theorem 5.2.7]{Graf}) implies that
    \begin{equation}\label{eq:Mihlin}
        \| T_{\widetilde{M_\ell^{\eps,k}}} \|_{L^{p}(\R^n) \longrightarrow L^{p}(\R^n)}
            \leq C 2^{\ell} \eps.
    \end{equation}

    Putting together \eqref{eq:claim2}, \eqref{eq:phiTM} and \eqref{eq:Mihlin} we conclude \eqref{eq:goal2}.
\end{proof}

To finish this section we present a transference result from $\T^n$ to $\T^{rn}$. Its idea when $n=1$ goes back to Bourgain \cite{Bou}.
Given an operator $S$ acting on $L^p_0(\T^n,X)$ we define its tensor extension
to $L^p_0(\T^n,L^p(\T^{(\ell-1)n},X)) \simeq L^p(\T^{(\ell-1)n},L^p_0(\T^{n},X))$, $\ell \in \N$, by
$$S_\ell = I_{L^p(\T^{(\ell-1)n})} \otimes S.$$

\begin{Prop}\label{Prop:TnTrn}
    Let $1<p<\infty$, $N,r \in \N$ and $m$, $m_j \in C^\infty(\R^n \setminus \{0\})$, $j=1, \dots, N$,
    be homogeneous functions.  If the following estimate holds
    $$\displaystyle \|T_{\widetilde{m}} f\|_{L^p(\T^n,X)}
                            \leq K \sum_{j=1}^N \| T_{\widetilde{m}_j}f\|_{L^p(\T^n,X)}, \quad f \in L^p_0(\T^n,X),$$
    then,
    $$\displaystyle \Big \| \sum_{\ell=1}^r (T_{\widetilde{m}})_{\ell} f_\ell \Big\|_{L^p(\T^{rn},X)}
                            \leq K \sum_{j=1}^N \Big\| \sum_{\ell=1}^r (T_{\widetilde{m}_j})_\ell f_\ell \Big \|_{L^p(\T^{rn},X)},
                            \quad f_\ell \in L^p_0(\T^n,L^p(\T^{(\ell-1)n},X)).$$
\end{Prop}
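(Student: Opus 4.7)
The plan is to apply Bourgain's dilation trick to lift the single-block hypothesis on $\T^n$ to the multi-block inequality on $\T^{rn}$. By density it suffices to take each $f_\ell$ to be a trigonometric polynomial on $\T^{\ell n}$ with values in $X$, of Fourier support in $\{|k_j|\leq M\}$ for a common $M\in\N$, satisfying $\widehat{f_\ell}(k_1,\dots,k_{\ell-1},0)=0$. For an integer $N>2M+1$ I will set
$$F_N(s)=\sum_{\ell=1}^r f_\ell(s,Ns,N^2s,\ldots,N^{\ell-1}s),\qquad s\in\T^n.$$
The size restriction on $N$ makes any decomposition $k=k_1+Nk_2+\cdots+N^{\ell-1}k_\ell$ with $|k_j|\leq M$ unique, while the mean-zero hypothesis forces the ``leading digit'' $k_\ell$ to be nonzero for any frequency contributing to $F_N$. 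In particular $\widehat{F_N}(0)=0$, so $F_N\in L^p_0(\T^n,X)$ and the hypothesis yields $\|T_{\widetilde m}F_N\|_{L^p(\T^n,X)}\leq K\sum_j\|T_{\widetilde m_j}F_N\|_{L^p(\T^n,X)}$.

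What remains is to show that each side converges, as $N\to\infty$, to its analogue in the desired conclusion; this splits into two components. The first is equidistribution: for any nonzero $(k_1,\dots,k_r)\in(\Z^n)^r$ the integer vector $k_1+Nk_2+\cdots+N^{r-1}k_r\in\Z^n$ is nonzero once $N$ is large, by a leading-term estimate, so Weyl's criterion gives $\int_{\T^n}H(s,Ns,\dots,N^{r-1}s)\,ds\to\int_{\T^{rn}}H$ for any continuous $H$ on $\T^{rn}$. Applying this to $H=\|G\|_X^p$ with $G=\sum_\ell(T_{\widetilde m})_\ell f_\ell$ (a trigonometric polynomial on $\T^{rn}$ with values in $X$, hence continuous) yields
$$\bigl\|\Gamma_N\bigr\|_{L^p(\T^n,X)}\longrightarrow\Bigl\|\sum_\ell(T_{\widetilde m})_\ell f_\ell\Bigr\|_{L^p(\T^{rn},X)},\qquad \Gamma_N(s):=\sum_\ell\bigl((T_{\widetilde m})_\ell f_\ell\bigr)(s,Ns,\dots,N^{\ell-1}s),$$
and likewise for each $T_{\widetilde m_j}$.

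The second component, which I expect to be the main obstacle, compares $T_{\widetilde m}F_N$ with $\Gamma_N$ via their Fourier expansions. Both are supported on the same finite set of frequencies, and at each such $k$ (corresponding to a unique active triple $(\ell,k_1,\dots,k_\ell)$) the Fourier coefficients differ by $(m(k)-m(k_\ell))\widehat{f_\ell}(k_1,\dots,k_\ell)$. The homogeneity of $m$ of order zero gives $m(k)=m(k/N^{\ell-1})$, and $k/N^{\ell-1}=k_\ell+O(1/N)\to k_\ell\neq 0$ as $N\to\infty$; continuity of $m$ on $\R^n\setminus\{0\}$ then forces $m(k)-m(k_\ell)\to 0$ uniformly over the finitely many active triples, so $\|T_{\widetilde m}F_N-\Gamma_N\|_{L^p(\T^n,X)}\to 0$. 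The identical argument applies to each $T_{\widetilde m_j}$, and passing to the limit $N\to\infty$ in the multiplier inequality for $F_N$ yields the claimed estimate on $\T^{rn}$. The mean-zero hypothesis is indispensable throughout: it forces $k_\ell\neq 0$ so that the homogeneous multiplier is never evaluated at its singularity.
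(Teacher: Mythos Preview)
Your proof is correct and follows the same Bourgain dilation idea as the paper, with one technical difference worth noting. To recover the $\T^{rn}$ norm, the paper introduces an auxiliary translation parameter $\bar t_r\in\T^{rn}$: it sets $\widetilde f_\ell(t)=f_\ell(\bar t_\ell+(M_1 t,\dots,M_\ell t))$ for a rapidly increasing sequence $M_1<M_2<\cdots$, applies the single-block hypothesis for each fixed $\bar t_r$, and then integrates over $\bar t_r$; by Fubini and periodicity this integration yields the full $L^p(\T^{rn},X)$ norm \emph{exactly}, with no limiting argument needed at that step (the only limit is $\eps\to 0$ in the multiplier comparison, handled via the mean value theorem just as you do). You instead take pure geometric dilations $(s,Ns,\dots,N^{r-1}s)$ with no translation and invoke equidistribution to let $N\to\infty$. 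The paper's averaging trick is a bit slicker in that it sidesteps the Weyl/density argument, while your version keeps everything in a single limit $N\to\infty$ and makes the uniqueness of the base-$N$ frequency decomposition explicit; both routes are standard implementations of Bourgain's transference and yield the same constant $K$.
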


\begin{proof}
    By density arguments it is enough to consider trigonometric polynomials $(f_\ell)_{\ell=1}^r$.
    For each $\ell =1, \dots, r$, we write
    \begin{equation*}\label{eq:expansion}
        f_\ell(\bar{t}_{\ell-1},t_\ell)
            = \sum_{\substack{k \in \Z^n \setminus \{0\} \\ |k| \leq B}}
            \sum_{\substack{s \in \Z^{(\ell-1)n} \\ |s| \leq B}}
            a_{s,k}^{(\ell)} e_s(\bar{t}_{\ell-1}) e_k(t_\ell),
    \end{equation*}
    where $\bar{t}_{\ell-1}=(t_1, \dots, t_{\ell-1}) \in \T^{(\ell-1)n}$, $t_\ell \in \T^n$ and
    $a_{s,k}^{(\ell)} \in X$. Notice that it is posible to choose the same $B \in \N$ for all $f_\ell$.
    Then,
    \begin{align*}
        (T_{\widetilde{m}})_{\ell} f_\ell(\bar{t}_{\ell-1},t_\ell)
            & = \sum_{\substack{k \in \Z^n \setminus \{0\} \\ |k| \leq B}}
                \sum_{\substack{s \in \Z^{(\ell-1)n} \\ |s| \leq B}}
                a_{s,k}^{(\ell)} e_s(\bar{t}_{\ell-1}) T_{\widetilde{m}} \Big(e_k(t_\ell)\Big) \\
            & = \sum_{\substack{k \in \Z^n \setminus \{0\} \\ |k| \leq B}}
                \sum_{\substack{s \in \Z^{(\ell-1)n} \\ |s| \leq B}}
                a_{s,k}^{(\ell)} e_s(\bar{t}_{\ell-1}) m(k) e_k(t_\ell), \quad (\bar{t}_{\ell-1},t_\ell) \in \T^{\ell n}.
    \end{align*}

    Fix some $\bar{t}_\ell=(\bar{t}_{\ell-1},t_\ell)=(t_1, \dots, t_\ell) \in \T^{\ell n}$ and take
    $\bar{M}_\ell = (\bar{M}_{\ell-1},M_\ell)=(M_1, \dots, M_\ell) \in (\N \setminus\{0\})^\ell$
    to be chosen below. We introduce some operations between vectors of different lengths,
    $$\bar{M}_\ell \otimes t
        = (\bar{M}_{\ell-1} \otimes t, M_\ell t)= (M_1 t , \dots, M_\ell t) \in (\T^{n})^\ell,
        \quad t \in \T^n,$$
    $$\bar{M}_{\ell-1} \odot s
        = M_1 s_1 + \dots + M_{\ell-1}s_{\ell-1} \in \Z^n, \quad s=(s_1, \dots, s_{\ell-1}) \in (\Z^n)^{\ell-1}.$$
    It is verified that
    $s \cdot (\bar{M}_{\ell-1} \otimes t) = (\bar{M}_{\ell-1} \odot s) \cdot t$,
    and hence $e_s(\bar{M}_{\ell-1} \otimes t)=e_{\bar{M}_{\ell-1} \odot s}(t)$.

    Now, we consider Bourgain's transformation of $f_\ell$, which is given by
    \begin{align*}\label{eq:11.22}
        \widetilde{f}_\ell(t)
            = f_\ell(\bar{t}_{\ell} + \bar{M}_\ell \otimes t)
            = \sum_{\substack{k \in \Z^n \setminus \{0\} \\ |k| \leq B}}
            \sum_{\substack{s \in \Z^{(\ell-1)n} \\ |s| \leq B}}
            a_{s,k}^{(\ell)} e_s(\bar{t}_{\ell-1}) e_k(t_\ell)e_{\bar{M}_{\ell-1} \odot s + M_\ell k}(t),
            \quad t \in \T^n.
    \end{align*}
    The function $\widetilde{(T_{\widetilde{m}})_\ell f_\ell}$ is defined analogously.

    We want to compare $\widetilde{(T_{\widetilde{m}})_\ell f_\ell}$ with $T_{\widetilde{m}} \widetilde{f}_\ell$.
    Observe that both of them are multipliers transforms of $\widetilde{f}_\ell$, where in the first case
    the exponential factor $e_{\bar{M}_{\ell-1} \odot s + M_\ell k}$ is multiplied by $m(k)$; and in the second
    one by $m(\bar{M}_{\ell-1} \odot s + M_\ell k)$. Using the homogeneity of $m$ and the mean value theorem,
    we arrive at
    \begin{align*}
        |m(k) - m(\bar{M}_{\ell-1} \odot s + M_\ell k)|
            & = \Big| m\Big( \frac{k}{|k|} \Big) - m\Big( \frac{\bar{M}_{\ell-1} \odot s}{M_\ell|k|} + \frac{k}{|k|}\Big) \Big| \\
            & \leq \Big| \frac{\bar{M}_{\ell-1} \odot s}{M_\ell|k|} \Big| \sup_z |\nabla m(z)|
            \leq  \frac{|\bar{M}_{\ell-1}|}{|M_\ell|}  \sup_z |\nabla m(z)|,
    \end{align*}
    where the supremum is taken over certain compact set away from the origin. This supremum is
    finite because $m \in C^\infty(\R^n \setminus \{0\})$. If we choose the sequence
    $M_1 < M_2 < \dots$ to be sufficiently rapidly increasing, we can make above difference smaller
    than any preassigned $\eps>0$.

    In conclusion, denoting by $\|g\|_A$ the sum of the $X$-norms of the
    Fourier coefficients of a trigonometric polynomial $g$ (on a torus of any dimension), we have
    deduced that
    $$\Big\|\widetilde{(T_{\widetilde{m}})_\ell f_\ell} - T_{\widetilde{m}} \widetilde{f}_\ell \Big\|_{L^p(\T^n,X)}
        \leq \Big\|\widetilde{(T_{\widetilde{m}})_\ell f_\ell} - T_{\widetilde{m}} \widetilde{f}_\ell \Big\|_{A}
        \leq \eps \|f_\ell\|_A, \quad \ell=1, \dots r,$$
    provided that $M_1 < M_2 < \dots$ is sufficiently rapidly increasing.

    Now, we apply the hypothesis in order to get
    \begin{align*}
        \Big\| \sum_{\ell=1}^r \widetilde{(T_{\widetilde{m}})_\ell f_\ell} \Big\|_{L^p(\T^n,X)}
        & \leq \Big\| T_{\widetilde{m}} \Big( \sum_{\ell=1}^r \widetilde{f}_\ell \Big) \Big\|_{L^p(\T^n,X)}
            + \eps \sum_{\ell=1}^r \|f_\ell\|_A \\
        & \leq K \sum_{j=1}^N \Big \| T_{\widetilde{m}_j} \Big( \sum_{\ell=1}^r \widetilde{f}_\ell \Big) \Big\|_{L^p(\T^n,X)}
            + \eps \sum_{\ell=1}^r \|f_\ell\|_A.
    \end{align*}
    So far, we have taken $L^p$-norms with respect to the variable $t \in \T^n$.
    It is time to take $L^p$-norms with respect to the fixed variables
    $\bar{t}_r=(t_1, \dots, t_r) \in \T^{rn}$,
    \begin{align*}
        & \Big( \int_{\T^{rn}} \int_{\T^n} \Big\| \sum_{\ell=1}^r
            (T_{\widetilde{m}})_\ell f_\ell(\bar{t}_\ell + \bar{M}_\ell \otimes t) \Big\|^p_X dt d\bar{t_r} \Big)^{1/p} \\
        & \qquad \leq K \sum_{j=1}^N \Big( \int_{\T^{rn}} \int_{\T^n} \Big\| \sum_{\ell=1}^r
             T_{\widetilde{m}_j} \widetilde{f}_\ell (\bar{t}_\ell + \bar{M}_\ell \otimes t) \Big\|^p_X dt d\bar{t_r} \Big)^{1/p}
            + \eps \sum_{\ell=1}^r \|f_\ell\|_A.
    \end{align*}
    Finally, exchanging the order of integration we notice that the dependence in $t$ and $\bar{M}_\ell$
    disappears, resulting in
    $$\Big \| \sum_{\ell=1}^r (T_{\widetilde{m}})_{\ell} f_\ell \Big\|_{L^p(\T^{rn},X)}
        \leq K \sum_{j=1}^N \Big\| \sum_{\ell=1}^r (T_{\widetilde{m}_j})_\ell f_\ell \Big \|_{L^p(\T^{rn},X)}
            + \eps \sum_{\ell=1}^r \|f_\ell\|_A,$$
    which finishes the proof of this lemma, once we take $\eps \to 0$.
\end{proof}

\section{Proof of Theorem~\ref{Th:main}} \label{sec:proofTh}

We start this section with an alternative, but equivalent, definition of a UMD Banach space.
Let $(\Omega,d\mu)$ be a probability space. We call $(\eps_\ell d_\ell(\eps_1, \dots, \eps_{\ell-1}))_{\ell \in \N}$
a Paley--Walsh martingale difference sequence when $d_\ell : \R^{\ell-1} \longrightarrow X$, $\ell \geq 2$; $d_1$ is
a constant map in $X$ and $(\eps_\ell)_{\ell \in \N}$ are independent Bernoulli random variables, that is,
$\mu(\{\omega \in \Omega : \eps_\ell(\omega)=\pm 1\})=1/2$, $\ell \in \N$.

The Banach space $X$ has UMD if
for some (equivalently, for every) $1<p<\infty$, there exists a constant $c_p$ such that
\begin{equation}\label{eq:defUMD2}
    \Big\| \sum_{\ell=1}^r \sigma_\ell \eps_\ell d_\ell(\eps_1, \dots, \eps_{\ell-1}) \Big\|_{L^p(\Omega,X)}
        \leq c_p \Big\| \sum_{\ell=1}^r \eps_\ell d_\ell(\eps_1, \dots, \eps_{\ell-1}) \Big\|_{L^p(\Omega,X)},
\end{equation}
for each $r \in \N$, all Walsh-Paley martingale difference sequence $(\eps_\ell d_\ell)_{\ell=1}^r$ and
every $(\sigma_\ell)_{\ell =1}^r \in \{-1,1\}^r$.
It is well-known that $\beta_p(X)= \inf c_p$ (see \cite[p. 12]{Bu4} and \cite{Mau}).

The next lemma reduces checking the condition \eqref{eq:defUMD2} to the following inequality, more convenient  for
our purposes.

\begin{Lem}\label{Lem:reduction}
    Let $1<p<\infty$. Suppose that there exists $C_p>0$ such that the following holds:
    For every $r\in\N$ and a sequence $(\sigma_\ell)_{\ell=1}^r$ in $\{-1,+1\}$, there is a sequence
    $(b_\ell)_{\ell=1}^r$ in $\{-1,+1\}^n$
    such that
    \begin{equation}\label{eq:reduction}
        \Big\| \sum_{\ell=1}^r \sigma_\ell a(b_\ell \cdot t_\ell) \Phi_\ell(t_1, \dots, t_{\ell-1}) \Big\|_{L^p(\T^{rn},X)}
            \leq C_p \Big\| \sum_{\ell=1}^r a(b_\ell \cdot t_\ell) \Phi_\ell(t_1, \dots, t_{\ell-1}) \Big\|_{L^p(\T^{rn},X)},
    \end{equation}
    for every $1$-dimensional trigonometric polynomial $a$ with zero mean value and every trigonometric polynomial
    $\Phi_\ell:\T^{(\ell-1)n}\longrightarrow X$.
    Then $\beta_p(X) \leq C_p$.
\end{Lem}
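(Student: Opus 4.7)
The plan is to reduce the Paley--Walsh form \eqref{eq:defUMD2} of UMD to the hypothesis \eqref{eq:reduction} by realizing i.i.d.\ Bernoulli variables as limits of scalar trigonometric polynomials $a_M(b_\ell\cdot t_\ell)$, and by expanding each coefficient $d_\ell$ in Walsh monomials built from the same functions.

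Fix a Paley--Walsh martingale difference sequence $(\eps_\ell d_\ell(\eps_1,\dots,\eps_{\ell-1}))_{\ell=1}^r$ and signs $(\sigma_\ell)_{\ell=1}^r$, and let $(b_\ell)_{\ell=1}^r$ be the sequence in $\{-1,+1\}^n$ supplied by the hypothesis for these signs. The key observation is that, since each $b\in\{-1,+1\}^n$ is nonzero, the group homomorphism $\T^n\to\T$, $t\mapsto b\cdot t\bmod 1$, pushes Lebesgue measure to Lebesgue measure (immediate from a Fourier character computation). Let $r_0:\T\to\{-1,+1\}$ be the Rademacher function $r_0(s)=\chi_{[0,1/2)}(s)-\chi_{[-1/2,0)}(s)$. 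Then on $\T^{rn}$ the variables $\widetilde\eps_\ell:=r_0(b_\ell\cdot t_\ell)$ are i.i.d.\ $\pm1$ Bernoullis, because they depend on disjoint coordinate blocks $t_\ell\in\T^n$ and each has the correct marginal.

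Next, approximate $r_0$ by trigonometric polynomials with zero mean. Taking $a_M$ to be the $M$th Fej\'er sum of $r_0$ gives $\|a_M\|_\infty\leq 1$, $\int_\T a_M=0$, and $a_M\to r_0$ a.e.\ and in every $L^p(\T)$. Expand each map $d_\ell:\{-1,+1\}^{\ell-1}\to X$ in the Walsh basis,
\[
d_\ell(x_1,\dots,x_{\ell-1})=\sum_{S\subseteq\{1,\dots,\ell-1\}}c_S^{(\ell)}\prod_{j\in S}x_j,\qquad c_S^{(\ell)}\in X,
\]
and set
\[
\Phi_\ell^M(t_1,\dots,t_{\ell-1}):=\sum_{S\subseteq\{1,\dots,\ell-1\}}c_S^{(\ell)}\prod_{j\in S}a_M(b_j\cdot t_j).
\]
Each factor $a_M(b_j\cdot t_j)$ is a trigonometric polynomial in $t_j\in\T^n$, so $\Phi_\ell^M$ is an $X$-valued trigonometric polynomial on $\T^{(\ell-1)n}$, and \eqref{eq:reduction} applies with $a=a_M$ and $\Phi_\ell=\Phi_\ell^M$.

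The last step is to send $M\to\infty$. The uniform bound $\|a_M\|_\infty\leq 1$ together with a.e.\ convergence $a_M\to r_0$ yields, by dominated convergence, that $a_M(b_\ell\cdot t_\ell)\Phi_\ell^M(t_1,\dots,t_{\ell-1})\to\widetilde\eps_\ell\,d_\ell(\widetilde\eps_1,\dots,\widetilde\eps_{\ell-1})$ in $L^p(\T^{rn},X)$ for each $\ell$. Passing to the limit on both sides of \eqref{eq:reduction} gives \eqref{eq:defUMD2} with constant $C_p$ for the specific martingale built from $(\widetilde\eps_\ell)$. Since $(\widetilde\eps_\ell)$ is i.i.d.\ Bernoulli, this martingale is distributionally identical to the original $(\eps_\ell d_\ell(\eps_1,\dots,\eps_{\ell-1}))$, so the $L^p$ norms agree, yielding $c_p\leq C_p$ and hence $\beta_p(X)\leq C_p$. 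The main technical point is keeping the products appearing in $\Phi_\ell^M$ uniformly bounded; using Fej\'er rather than plain Fourier partial sums is essential for this, and it is what allows dominated convergence to close the argument.
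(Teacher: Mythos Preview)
Your proof is correct and follows essentially the same route as the paper: realize i.i.d.\ Bernoullis as $\sgn(b_\ell\cdot t_\ell)$ on $\T^{rn}$, approximate by mean-zero trigonometric polynomials, apply the hypothesis \eqref{eq:reduction}, and pass to the limit. The only technical variation is that you use Fej\'er means (giving the uniform bound $\|a_M\|_\infty\le 1$) together with an explicit Walsh expansion of $d_\ell$, so that dominated convergence closes the argument, whereas the paper approximates $\sgn$ and $d_\ell(\sgn,\dots)$ separately in $L^p$ and controls the cross terms via the triangle inequality; both implementations work equally well here.
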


\begin{proof}
    Let $r \in \N$, let $(\eps_\ell d_\ell)_{\ell=1}^r$ be a given Paley--Walsh martingale difference sequence  and
    $(\sigma_\ell)_{\ell =1}^r \in \{-1,1\}^r$.
    Take also $b_\ell \in \{-1,1\}^n$, $\ell=1, \dots, r$, satisfying \eqref{eq:reduction}.

    It is obvious that the function $\sgn(\theta)$, $\theta \in \T$, is a Bernoulli random variable.
    Moreover, for every $b \in \{-1,1\}^n$, the  function $\sgn( b \cdot t)$, $t \in \T^n$, is also a
    Bernoulli random variable.
    Since $(\eps_1(\omega), \dots, \eps_r(\omega))$ and $(\sgn(b_1 \cdot t_1), \dots, \sgn(b_r \cdot t_r))$ have the same
    distribution, 
    we can write
\begin{equation}\label{eq:equid}
\begin{split}
     &\Big\| \sum_{\ell=1}^r \sigma_\ell' \eps_\ell d_\ell\Big(\eps_1, \dots, \eps_{\ell-1})\Big) \Big\|_{L^p(\Omega,X)}  \\
     &= \Big\| \sum_{\ell=1}^r \sigma_\ell' \sgn(b_\ell \cdot t_\ell) d_\ell\Big(\sgn(b_1 \cdot t_1), \dots, \sgn(b_{\ell-1} \cdot t_{\ell-1})\Big) \Big\|_{L^p(\T^{rn},X)},
\end{split}
\end{equation}
for both $\sigma_\ell'=\sigma_\ell$ and $\sigma_\ell'=1$.

     For a fixed $\delta>0$, we choose a
    $1$-dimensional trigonometric polynomial $a$ with zero mean value, verifying that
    $$\|\sgn(\cdot) - a \|_{L^p(\T)} < \delta,$$
where $\sgn$ is the $1$-periodic extension of the sign on $[-1/2,1/2)$.
Then it easily follows that, for every $\ell=1, \dots, r$,
    $$\|\sgn(b_\ell \cdot t) - a(b_\ell \cdot t) \|_{L^p(\T^n)} < \delta,$$
 and so in particular
 \begin{equation*}
  \|a(b_\ell \cdot t) \|_{L^p(\T^n)} \leq \|\sgn(b_\ell \cdot t)\|_{L^p(\T^n)} +\delta=1+\delta.
\end{equation*}

We also choose trigonometric polynomials $\Phi_\ell:\T^{(\ell-1)n}\longrightarrow X$ such that
\begin{equation*}
   \Big\|\Phi_\ell(t_1,\ldots,t_{\ell-1})-d_\ell\Big(\sgn(b_1 \cdot t_1), \dots, \sgn(b_{\ell-1} \cdot t_{\ell-1})\Big) \Big\|_{L^p(\T^{(\ell-1)n},X)}<\delta,
   \quad \ell=1, \dots, r.
\end{equation*}
It follows that
\begin{equation}\label{eq:approx}
\begin{split}
  &\Big\|a(b_\ell\cdot t_\ell)\Phi_\ell(t_1,\ldots,t_{\ell-1})
  -\sgn(b_\ell\cdot t_{\ell})d_\ell\Big(\sgn(b_1 \cdot t_1), \dots, \sgn(b_{\ell-1} \cdot t_{\ell-1})\Big) \Big\|_{L^p(\T^{\ell n},X)} \\
  &\leq\|a(b_\ell\cdot t_{\ell})\|_{L^p(\T^n)}\Big\|\Phi_\ell(t_1,\ldots,t_{\ell-1})
  -d_\ell\Big(\sgn(b_1 \cdot t_1), \dots, \sgn(b_{\ell-1} \cdot t_{\ell-1})\Big) \Big\|_{L^p(\T^{(\ell-1) n},X)} \\
  &\quad+\|a(b_\ell\cdot t_\ell)-\sgn(b_\ell\cdot t_\ell)\|_{L^p(\T^n)}
  \Big\|d_\ell\Big(\sgn(b_1 \cdot t_1), \dots, \sgn(b_{\ell-1} \cdot t_{\ell-1})\Big) \Big\|_{L^p(\T^{(\ell-1) n},X)} \\
  &\leq (1+\delta)\cdot\delta+\delta\cdot C \leq C\delta, \quad \ell=1, \dots, r,
\end{split}
\end{equation}
where the first $C$ is the maximum of the $L^p$ norms of the given functions $d_\ell(\eps_1,\ldots,\eps_{\ell-1})$.

A combination of \eqref{eq:equid} and \eqref{eq:approx} with the assumption \eqref{eq:reduction} now shows that
\begin{equation*}
\begin{split}
  &\Big\| \sum_{\ell=1}^r \sigma_\ell \eps_\ell d_\ell(\eps_1, \dots, \eps_{\ell-1}) \Big\|_{L^p(\Omega,X)}
  \leq \Big\| \sum_{\ell=1}^r \sigma_\ell a(b_\ell \cdot t_\ell) \Phi_\ell(t_1,\ldots,t_{\ell-1}) \Big\|_{L^p(\T^{rn},X)} +Cr\delta \\
  &\leq C_p \Big\| \sum_{\ell=1}^r  a(b_\ell \cdot t_\ell) \Phi_\ell(t_1,\ldots,t_{\ell-1}) \Big\|_{L^p(\T^{rn},X)} +Cr\delta \\
  &\leq C_p\Big\| \sum_{\ell=1}^r  \eps_\ell d_\ell(\eps_1,\ldots,\eps_{\ell-1}) \Big\|_{L^p(\Omega,X)}+(C_p+1)Cr\delta.
\end{split}
\end{equation*}
    Since $\delta>0$ is arbitrarily small, we conclude that \eqref{eq:defUMD2} holds and $\beta_p(X) \leq C_p$.
\end{proof}

\begin{proof}[Proof of Theorem~\ref{Th:main}]
Let us first make an additional useful reduction. Note that the set $F$ can be neither $\varnothing$ or $\{1,\ldots,n\}$, because for these sets we have $\sum_{\ell\in F}\beta_\ell=\sum_{\ell\in F}\alpha^j_\ell$ (using $|\beta|=|\alpha^j|$ if $F=\{1,\ldots,n\}$), contradicting the fact that these have different parity. Thus, there exists an index $s\in F$ as well as $t\in\{1,\ldots,n\}\setminus F$. By applying the assumption \eqref{eq:estim2} to $\partial_k u$ in place of $u$, we see that \eqref{eq:estim2}  implies a similar estimate with $\beta+e_k$ in place of $\beta$ and $\alpha^j+e_k$ in place of $\alpha^j$. Thus, if $\sum_{\ell\in F}\beta_\ell$ is not already odd, we can make it odd by adding $e_s$ to both $\beta$ and $\alpha^j$; note that this preserves the other assumptions. After, if the new $|\beta|$ is not even, we can make it even by adding $e_t$ to both $\beta$ and $\alpha^j$, which again preserves the other assumptions, including the value of $\sum_{\ell\in F}\b!
 eta_\ell$, since $t\notin F$. Thus, if the assumptions of Theorem~\ref{Th:main} hold for some $\beta,\alpha^j$, they also hold for some (possibly different) $\beta,\alpha^j$ which satisfy in addition that $|\beta|=|\alpha^j|$ is even and $\sum_{\ell\in F}\beta_{\ell}$ is odd (so that $\sum_{\ell\in F}\alpha^j_\ell$ is again even). We henceforth assume this, and proceed to prove the claim that $X$ is UMD with $\beta_p(X)\leq KN$.

So, let $\alpha^j$, $\beta$ and $F$ be as in the
statement of the theorem, with the additional assumptions just made, and $m$, $m_j$ be the multipliers defined in \eqref{eq:mmj}.
The evenness of $|\beta|=|\alpha^j|$ gives the advantage that the multipliers
$m(\xi)=\xi^\beta/|\xi|^{|\beta|}$ and $m_j(\xi)=\xi^{\alpha^j}/|\xi|^{|\alpha^j|}$ are not only homogeneous, but also even, so that $m(\lambda\xi)=m(\xi)$ holds for all $\lambda\in\R\setminus\{0\}$, not just $\lambda>0$.

Fix a 1-dimensional trigonometric polynomial
$$a(\theta)
    = \sum_{\ell \in \Z} \widehat{a}(\ell) e_\ell(\theta), \quad \theta \in \T,$$
with $\widehat{a}(0)=0$, and consider the function $a_b(t)=a(b \cdot t)$, $t \in \T^n$, for certain $b \in \{-1,1\}^n$.
Also, if $J \subset \{1, \dots, n\}$ we define $b_J \in \{-1,1\}^n$ to be the vector
whose $\ell$-th component is equal to $-1$, if $\ell \in J$; or $1$ otherwise. With this notation,
take $a^+=a_{(1, \dots, 1)}$ and $a^-=a_{b_{F}}$.

Since $m(\ell b)=m(b)$ for all $\ell\in\Z\setminus\{0\}$, we find that
\begin{equation}\label{eq:claim}
    T_{\widetilde{m}} a_b(t)
    =T_{\widetilde{m}}\sum_{\ell\in\Z}\widehat{a}(\ell)e_{\ell b}(t)
    =\sum_{\ell\in\Z}\widehat{a}(\ell)m(\ell b)e_{\ell b}(t)
        = m(b)a_b(t), \quad b \in \{-1,1\}^n,
\end{equation}
where $T_{\widetilde{m}}$ the discrete Fourier multiplier defined in \eqref{eq:Tmdiscret}.
Now, taking in mind the assumptions imposed over $F$, we deduce
as a direct consequence of \eqref{eq:claim} that
\begin{equation}\label{eq:pm}
    T_{\widetilde{m}} a^{\pm} = \pm n^{-|\beta|/2} a^{\pm}
    \quad \text{and} \quad
    T_{\widetilde{m}_j} a^{\pm} = n^{-|\beta|/2} a^{\pm}, \ j=1, \dots, N.
\end{equation}

Let $r \in \N$, $\sigma_\ell \in \{-1,1\}$ and
$\Phi_\ell:\T^{(\ell-1)n}\longrightarrow X$ be trigonometric polynomials for $\ell=1, \dots, r$.
Moreover, we define
$\zeta_\ell = a^+$, if $\sigma_\ell=1$; and $\zeta_\ell = a^-$, when $\sigma_\ell=-1$.
As it was commented in the introduction, \eqref{eq:estim2} implies \eqref{eq:estim3}. Then,
Propositions~\ref{Prop:RnTn} and \ref{Prop:TnTrn} together with \eqref{eq:pm}, allows us to write
\begin{align*}
     \Big\| \sum_{\ell=1}^r \sigma_\ell \zeta_\ell \Phi_\ell \Big\|_{L^p(\T^{rn},X)}
     & = n^{|\beta|/2} \Big\| \sum_{\ell=1}^r T_{\widetilde{m}}\zeta_\ell \Phi_\ell \Big\|_{L^p(\T^{rn},X)}
      \leq  K n^{|\beta|/2} \sum_{j=1}^N \Big\| \sum_{\ell=1}^r T_{\widetilde{m}_j}\zeta_\ell \Phi_\ell \Big\|_{L^p(\T^{rn},X)} \\
     & = K N \Big\| \sum_{\ell=1}^r \zeta_\ell \Phi_\ell \Big\|_{L^p(\T^{rn},X)}.
\end{align*}
Finally, an application of Lemma~\ref{Lem:reduction} implies that $\beta_p(X) \leq KN$, and hence the proof of
this theorem is completed.
\end{proof}



\begin{thebibliography}{10}

\bibitem{BNS}
{\sc O.~V. Besov, V.~P. Il{\cprime}in, and S.~M. Nikol{\cprime}ski{\u\i}}, {\em
  Integral representations of functions and imbedding theorems. {V}ol. {I}}, V.
  H. Winston \& Sons, Washington, D.C., 1978.
\newblock Translated from the Russian, Scripta Series in Mathematics, Edited by
  Mitchell H. Taibleson.

\bibitem{Bou}
{\sc J.~Bourgain}, {\em Some remarks on {B}anach spaces in which martingale
  difference sequences are unconditional}, Ark. Mat., 21 (1983), pp.~163--168.

\bibitem{Bou3}
\leavevmode\vrule height 2pt depth -1.6pt width 23pt, {\em Vector-valued
  singular integrals and the {$H^1$}-{BMO} duality}, in Probability theory and
  harmonic analysis ({C}leveland, {O}hio, 1983), vol.~98 of Monogr. Textbooks
  Pure Appl. Math., Dekker, New York, 1986, pp.~1--19.



\bibitem{Bu2}
{\sc D.~L. Burkholder}, {\em A geometric
  condition that implies the existence of certain singular integrals of
  {B}anach-space-valued functions}, in Conference on harmonic analysis in honor
  of {A}ntoni {Z}ygmund, {V}ol. {I}, {II} ({C}hicago, {I}ll., 1981), Wadsworth
  Math. Ser., Wadsworth, Belmont, CA, 1983, pp.~270--286.

\bibitem{Bu5}
\leavevmode\vrule height 2pt depth -1.6pt width 23pt, {\em Boundary value
  problems and sharp inequalities for martingale transforms}, Ann. Probab., 12
  (1984), pp.~647--702.

\bibitem{Bu4}
\leavevmode\vrule height 2pt depth -1.6pt width 23pt, {\em Explorations in
  martingale theory and its applications}, in \'{E}cole d'\'{E}t\'e de
  {P}robabilit\'es de {S}aint-{F}lour {XIX}---1989, vol.~1464 of Lecture Notes
  in Math., Springer, Berlin, 1991, pp.~1--66.

\bibitem{Bu6}
\leavevmode\vrule height 2pt depth -1.6pt width 23pt, {\em Martingales and
  singular integrals in {B}anach spaces}, in Handbook of the geometry of
  {B}anach spaces, {V}ol. {I}, North-Holland, Amsterdam, 2001, pp.~233--269.

\bibitem{CL}
{\sc T.~Coulhon and D.~Lamberton}, {\em R\'egularit\'e {$L^p$} pour les
  \'equations d'\'evolution}, in S\'eminaire d'{A}nalyse {F}onctionelle
  1984/1985, vol.~26 of Publ. Math. Univ. Paris VII, Paris, 1986, pp.~155--165.

\bibitem{Foll}
{\sc G.~B. Folland}, {\em Real analysis. {M}odern techniques and their
  applications}, Pure and Applied Mathematics (New York), John Wiley \& Sons
  Inc., New York, second~ed., 1999.

\bibitem{GMS}
{\sc S.~Geiss, S.~Montgomery-Smith, and E.~Saksman}, {\em On singular integral
  and martingale transforms}, Trans. Amer. Math. Soc., 362 (2010),
  pp.~553--575.

\bibitem{Graf}
{\sc L.~Grafakos}, {\em Classical {F}ourier analysis}, vol.~249 of Graduate
  Texts in Mathematics, Springer, New York, second~ed., 2008.

\bibitem{Gue}
{\sc S.~Guerre-Delabri{\`e}re}, {\em Some remarks on complex powers of
  {$(-\Delta)$} and {UMD} spaces}, Illinois J. Math., 35 (1991), pp.~401--407.

\bibitem{HTV}
{\sc E.~Harboure, J.~L. Torrea, and B.~Viviani}, {\em Vector-valued extensions
  of operators related to the {O}rnstein-{U}hlenbeck semigroup}, J. Anal.
  Math., 91 (2003), pp.~1--29.

\bibitem{Hy3}
{\sc T.~Hyt\"onen}, {\em Aspects of probabilistic {L}ittlewood-{P}aley theory in
  {B}anach spaces}, in Banach spaces and their applications in analysis, Walter
  de Gruyter, Berlin, 2007, pp.~343--355.

\bibitem{Hy4}
\leavevmode\vrule height 2pt depth -1.6pt width 23pt, {\em Estimates for
  partial derivatives of vector-valued functions}, Illinois J. Math., 51
  (2007), pp.~731--742.

\bibitem{Hy}
\leavevmode\vrule height 2pt depth -1.6pt width 23pt, {\em
  Littlewood-{P}aley-{S}tein theory for semigroups in {UMD} spaces}, Rev. Mat.
  Iberoam., 23 (2007), pp.~973--1009.

\bibitem{Iwa}
{\sc T.~Iwaniec}, {\em Extremal inequalities in Sobolev spaces and quasiconformal mappings},
Z. Anal. Anwendungen 1 (1982), pp.~1--16.

\bibitem{KaWe}
{\sc C.~Kaiser and L.~Weis}, {\em Wavelet transform for functions with values
  in {UMD} spaces}, Studia Math., 186 (2008), pp.~101--126.

\bibitem{KuWe}
{\sc P.~C. Kunstmann and L.~Weis}, {\em Maximal {$L_p$}-regularity for
  parabolic equations, {F}ourier multiplier theorems and
  {$H^\infty$}-functional calculus}, in Functional analytic methods for
  evolution equations, vol.~1855 of Lecture Notes in Math., Springer, Berlin,
  2004, pp.~65--311.

\bibitem{Mau}
{\sc B.~Maurey}, {\em Syst\`eme de {H}aar}, in S\'eminaire {M}aurey-{S}chwartz
  1974--1975: {E}spaces {L{$\sup{p}$}}, applications radonifiantes et
  g\'eom\'etrie des espaces de {B}anach, {E}xp. {N}os. {I} et {II}, Centre
  Math., \'Ecole Polytech., Paris, 1975, pp.~26 pp. (erratum, p. 1).

\bibitem{Rub}
{\sc J.~L. Rubio~de Francia}, {\em Martingale and integral transforms of
  {B}anach space valued functions}, in Probability and {B}anach spaces
  ({Z}aragoza, 1985), vol.~1221 of Lecture Notes in Math., Springer, Berlin,
  1986, pp.~195--222.


\bibitem{Ste2}
{\sc E.~M. Stein}, {\em Harmonic analysis: real-variable methods,
  orthogonality, and oscillatory integrals}, vol.~43 of Princeton Mathematical
  Series, Princeton University Press, Princeton, NJ, 1993.

\bibitem{Xu}
{\sc Q.~Xu}, {\em Littlewood-{P}aley theory for functions with values in
  uniformly convex spaces}, J. Reine Angew. Math., 504 (1998), pp.~195--226.

\end{thebibliography}

\def\cprime{$'$}

\end{document}